\date{\today}
\thanks{The author was partially supported by the Swedish Research Council} 
\def\deg{\text{deg}\,}
\def\dbar{\bar\partial}
\def\R{{\mathbb R}}
\def\C{{\mathbb C}}
\def\P{{\mathbb P}}
\def\O{{\mathcal O}}
\def\Re{{\rm Re\,  }}
\def\U{{\mathcal U}}
\def\codim{\text{codim}\,}
\def\supp{\text{supp}\,}
\def\1{\mathbf 1}
\def\Z{{\mathbb Z}}
\def\N{{\mathbb N}}
\def\J{{\mathcal J}}
\def\vol{\text{Vol}}
\def\np{\mathcal{NP}}
\def\be{\begin{equation}}
\def\ee{\end{equation}}
\def\J{{\mathcal J}}
\def\a{{\mathfrak a}}
\def\poly{{\mathcal P}}
\def\Q{{\mathcal P}}
\def\Qq{{\mathcal Q}}
\def\pdeg{\text{deg}}
\def\set{{\mathcal S}}
\newtheorem{thm}{Theorem}[section]
\newtheorem{lma}[thm]{Lemma}
\theoremstyle{definition}
\theoremstyle{remark}
\newtheorem{preremark}[thm]{Remark}
\newtheorem{preex}[thm]{Example}
\newenvironment{remark}{\begin{preremark}}{\qed\end{preremark}}
\newenvironment{ex}{\begin{preex}}{\qed\end{preex}}
\numberwithin{equation}{section}
\begin{document}

\title[Sparse effective membership problems via residue currents ]{Sparse effective membership problems \\ via residue currents} 

\date{\today}

\author{Elizabeth Wulcan}

\address{Dept of Mathematics, University of Michigan, Ann Arbor \\ MI 48109-1043\\ USA}

\email{wulcan@umich.edu}

\subjclass{}

\keywords{}


\maketitle

\begin{abstract}
We use residue currents on toric varieties to obtain bounds on the degrees of solutions to polynomial ideal membership problems. Our bounds depend on (the volume of) the Newton polytope of the polynomial system and are therefore well adjusted to sparse polynomial systems. We present sparse versions of Max N\"other's $AF+BG$ Theorem, Macaulay's Theorem, and Koll{\'a}r's Effective Nullstellensatz, as well as recent results by Hickel and Andersson-G\"otmark. 
\end{abstract}

\section{Introduction}\label{intro}

Residue currents are generalizations of one complex variable residues and can be thought of as currents representing ideals of holomorphic functions or polynomials. 
The purpose of this paper is to investigate how residue currents on toric varieties can be used to obtain effective solutions to polynomial ideal membership problems. 

Let $F_1, \ldots, F_m$, and $\Phi$ be polynomials in $\C [z_1,\ldots, z_n]$. Assume that $\Phi$ vanishes on the zero set $V_F$ of the $F_j$. Then \emph{Hilbert's Nullstellensatz} asserts that there are polynomials $G_1,\ldots, G_m$ such that 
\begin{equation}\label{hilbert}
\sum_{j=1}^m F_j G_j = \Phi ^\nu
\end{equation}
for some integer $\nu$ large enough. Much attention has recently been paid to the problem of bounding the complexity of the solutions to \eqref{hilbert}, starting with the breakthrough work of Brownawell ~\cite{Br}. For example, one can ask for bounds of $\nu$ and the degrees of the $G_j$ in terms of the degrees of the $F_j$. The optimal result in this direction was obtain by Koll{\'a}r ~\cite{Kollar}: 

\noindent
\emph{Assume that $\deg F_j \leq d\neq 2$. Then one can find $G_j$ so that \eqref{hilbert} holds for some $\nu \leq d^{\min(m,n)}$ and
\begin{equation}\label{kollupp}
\deg (F_j G_j) \leq (1 + deg \Phi )d^{\min (m,n)}.
\end{equation}
} 
The restriction $d\neq 2$ was removed by Jelonek, ~\cite{Jelonek}, for $m\leq n$. For $m\geq n+1$ Sombra, \cite{Sombra}, proved that one can find $G_j$ that satisfy $\deg (F_j G_j) \leq (1 + deg \Phi )2^{n+1}$. 

Koll{\'a}r's and Jelonek's result are sharp; the original statements also take into account different degrees of the $F_j$. In many cases, however, one can do much better.  Classical results due to Max N\"other ~\cite{Not} and Macaulay ~\cite{Mac} show that the bounds can be substantially improved if (the homogenizations of) the $F_j$ have no zeros at infinity. 

Another situation in which one can improve Koll{\'a}r's result is when the system of polynomials is \emph{sparse}, meaning that its Newton polytope has small volume. Recall that the \emph{support} $\supp F$ of a Laurent polynomial $F=\sum_{\alpha\in\Z^n} c_\alpha z^\alpha =\sum_{\alpha\in\Z^n} c_\alpha z_1^{\alpha_1}\cdots z_n^{\alpha_n}$ in $\C[z_1^{\pm 1},\ldots, z_n^{\pm 1}]$ is defined as $\supp F=\{\alpha\in\Z^n \text{ such that } c_\alpha \neq 0\}$ and that the \emph{Newton polytope} $\np(F_1,\ldots, F_m)$ of the system of polynomials $F_1,\ldots, F_m$ is the convex hull of $\bigcup_j \supp F_j$ in $\R^n$. In particular, a polynomial of degree $d$ has support in $d\Sigma^n$, where $\Sigma^n$ is the $n$-dimensional simplex in $\R^n$ with the origin and the unit lattice points $e_1=(1,0,\ldots,0), e_2=(0,1,0,\ldots,0), \ldots, e_n=(0,\ldots, 0, 1)$ as vertices. The \emph{normalized volume} $\vol(\set)$ of a convex set $\set$ in $\R^n$ is $k!$ times the Euclidean volume of $\set$, where $k$ is the dimension of $\set$, so that $\vol(\Sigma^n)=1$. A \emph{lattice polytope} is a polytope in $\R^n$ with vertices in $\Z^n$. Sombra ~\cite{Sombra} proved the following using techniques from toric geometry:

\noindent
\emph{
Let $\Q$ be a lattice polytope that contains $\np=\np(F_1, \ldots, F_m, 1, z_1, \ldots, z_n)$. Then there are polynomials $G_j$ that satisfy \eqref{hilbert} for $\nu\leq n^{n+2}\vol(\Q)$ and 
\begin{equation}\label{sombrastod}
\supp (F_j G_j)\subseteq (1+\deg\Phi)n^{n+3} \vol (\Q) \Q.
\end{equation}
In particular, if $\deg F_j \leq d$, then
\begin{equation}\label{sombradeg}
\deg (F_j G_j) \leq (1+\deg\Phi)n^{n+3} \vol (\np) d.
\end{equation} 
}
In general the bound \eqref{sombradeg} is less sharp than Koll{\'a}r's bound, but if $d$ is large compared to $n$ and $\vol(\Q)$ is small compared to $\vol (d\Sigma^n)=d^n$, then \eqref{sombradeg} is sharper than \eqref{kollupp}. 

The main ingredient in Sombra'a proof is an effective Nullstellensatz for arithmetically  Cohen-Macaulay varieties, \cite[Lemma~1.1]{Sombra}. This result was later extended to  general varieties by Koll{\'a}r, ~\cite{K2}, Ein-Lazarsfeld, ~\cite{EL}, and Jelonek, ~\cite{Jelonek}. Combining their results and Sombra's techniques, \eqref{sombrastod} can be substantially improved; in many cases one can get rid of the factor $n^{n+3}$, \cite{Sombra2}. 
For example, if $F_1,\ldots, F_n$ lack common zeros then one can solve \eqref{hilbert} with $\Phi=1$ and 
\begin{equation*}
\supp (F_jG_j)\subseteq \vol (\Q) \Q,
\end{equation*}
as follows using Jelonek's Nullstellensatz, \cite{Jelonek}. 
In \cite[Example~2]{EL}, due to Rojas, the special case when $\Q$ is a product of simplices is considered.

\smallskip

Residue currents have been used as a tool to solve polynomial membership problems by several authors, see, for example, \cite{BGVY}. In this paper we extend the ideas developed by Andersson ~\cite{A3} and Andersson-G\"otmark ~\cite{AG}, who used residue currents on complex projective space $\P^n$ to obtain effective solutions. We consider residue currents on general toric compactifications of $\C^n$ in order to obtain sparse effective results. Given a lattice polytope $\poly$ one can construct a toric variety $X_\poly$ and a line bundle $\O(D_\poly)$ on $X_\poly$ whose global sections correspond precisely to polynomials with support in $\poly$, see Section ~\ref{toric}. The toric variety $X_\poly$ is smooth if for each vertex $v$ of $\poly$ the smallest integer normal vectors of the facets of $\poly$ containing $v$ form a base for the $\Z^n$, see ~\cite[p.~29]{F}. We then say that the lattice polytope $\poly$ is \emph{smooth} with respect to the lattice $\Z^n$, see ~\cite{CS}. 

The following sparse version of Macaulay's Theorem \cite{Mac} is due to Castryck-Denef-Vercauteren \cite{CDV}.
\begin{thm}\label{mac}
Let $F_1, \ldots, F_m$ be polynomials in $\C [z_1,\ldots, z_n]$ and let $\Q$ be a lattice polytope that contains the Newton polytope of $F_1,\ldots F_m$. Assume that the $F_j$ have no common zeros neither in $\C^n$ nor at infinity. 
Then there are polynomials $G_j$ that satisfy 
\begin{equation}\label{ett}
\sum_{j=1}^m F_j G_j = 1
\end{equation} 
and
\begin{equation}\label{lasse}
\supp (F_j G_j)\subseteq (n+1) \Q.
\end{equation}
\end{thm}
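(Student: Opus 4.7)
The plan is to lift the problem to a smooth toric compactification of $(\C^*)^n$ modeled on $\Q$, form a Koszul/residue-current division formula for the $f_j$, and convert the resulting currents into holomorphic coefficients via Demazure's vanishing theorem on the toric variety.

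First, refine the normal fan of $\Q$ if necessary to obtain a smooth complete toric variety $X$ with a nef line bundle $L$ such that $H^0(X,L^k)$ is canonically identified with the space of polynomials whose Newton polytope is contained in $k\Q$; the $F_j$ then correspond to sections $f_j\in H^0(X,L)$. Since the $F_j$ have no common zero at the origin, one has $0\in\Q$, so the constant polynomial $1$ defines a section $s\in H^0(X,L)$. Using the hypothesis that the $F_j$ also have no common projective zeros at infinity, one verifies that $f:=(f_1,\ldots,f_m)$ has no common zeros anywhere on $X$, by comparing the toric boundary of $X$ with that of $\P^n$ (e.g.\ on a common toric refinement dominating both).

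With $f$ nowhere vanishing, Andersson's construction on the Koszul bundle $E=L^{\oplus m}$ produces a smooth form $u=u_1+\cdots+u_n$, where $u_k$ has bidegree $(0,k-1)$ with values in $\Lambda^kE^*\cong\bigoplus L^{-k}$ and satisfies $\delta_f u_1=1$ and $\delta_f u_{k+1}=\dbar u_k$, with $\delta_f$ denoting contraction by $f$. To convert the current-valued identity $\delta_f u_1=1$ into a holomorphic division taking values in $L^n$, multiply through by $s^{n+1}\in H^0(X,L^{n+1})$ and successively solve a chain of $\dbar$-equations from the top down; the obstructions lie in $H^{k-1}(X,L^{n+1-k})$ for $k=2,\ldots,n+1$. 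These groups all vanish: for $k\le n$ by Demazure's theorem applied to the nef bundle $L^{n+1-k}$ on the smooth complete toric variety $X$, and for $k=n+1$ by $H^n(X,\O)=0$ since $X$ is rational. The outcome is sections $G_j\in H^0(X,L^n)$ with $\sum_j f_j G_j=s^{n+1}$; reading sections as polynomials via the identification above gives $\sum F_jG_j=1$ with $\supp(F_jG_j)\subseteq\Q+n\Q=(n+1)\Q$.

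The main technical obstacle is the geometric comparison in the first step: the projective no-zeros-at-infinity hypothesis must be shown to imply global non-vanishing of $f$ on the toric compactification $X$, since the toric boundary of $X_\Q$ is parametrized by the normal fan of $\Q$ and is not in general a subvariety of $\P^n$. I expect to handle this via an orbit-by-orbit analysis of the toric boundary of $X$, relating each torus orbit to an appropriate coordinate subspace at infinity in $\P^n$ on which some $F_j$ is by hypothesis nonzero. The Koszul/residue-current machinery and the cohomological step via Demazure vanishing are then routine in this setting.
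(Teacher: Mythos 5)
The step you flag as ``the main technical obstacle'' is in fact the fatal one, and it cannot be repaired in the form you propose. In Theorem~\ref{mac} the phrase ``no common zeros at infinity'' is not the condition at the hyperplane at infinity in $\P^n$: as specified in Section~\ref{macresults}, it is \emph{by definition} the condition that $0\in\Q$ and that the $\Q$-homogenizations $f_j$ have no common zeros on a toric compactification $X_\Delta$ with $\Delta$ compatible with $\Q$. You instead assume the projective condition and propose to deduce the toric nonvanishing of $f=(f_1,\ldots,f_m)$ by an orbit-by-orbit comparison of the boundary of $X$ with that of $\P^n$. That implication is false, not merely delicate: whenever $\Q$ strictly contains $\np(F_1,\ldots,F_m)$ there is a vertex $v$ of $\Q$ outside the Newton polytope, every $f_j$ is then divisible by one of the boundary coordinates attached to the facets through $v$, and all $f_j$ vanish at the corresponding torus-fixed point, regardless of how the $F_j$ behave at infinity in $\P^n$ (this is exactly the vertex argument in Section~\ref{macmax}). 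Concretely, $F_1=z_1$, $F_2=z_2$, $F_3=1-z_1-z_2$ have no common zeros in $\C^2$ nor at the line at infinity in $\P^2$, yet with $\Q=[0,1]^2$ their $\Q$-homogenizations $z_1w_2$, $w_1z_2$, $w_1w_2-z_1w_2-w_1z_2$ on $X_\Q=\P^1\times\P^1$ all vanish at $([1:0],[1:0])$. Conversely, Example~\ref{mellandag} shows the toric condition can hold when the projective one fails; the two hypotheses are incomparable, so no comparison of boundaries can deliver the nonvanishing your construction needs. The hypothesis must simply be taken in the toric sense, and then your first step is vacuous rather than provable from $\P^n$-data.

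Granting that reading, the remainder of your argument is correct and is essentially the paper's proof: homogenize to sections of the nef bundle associated with $\Q$ on a smooth complete toric refinement, note that a nowhere vanishing $f$ makes the residue obstruction trivial ($R^f=0$), run the Koszul homotopy against $s^{n+1}$ and solve the $\dbar$-chain, with obstructions in $H^{k-1}(X,L^{n+1-k})$ killed by toric vanishing for globally generated bundles (including $H^n(X,\O)=0$), and dehomogenize to get $\supp(F_jG_j)\subseteq\Q+n\Q=(n+1)\Q$. This is precisely Theorem~\ref{arkesatsen} (built on Theorem~\ref{abas} and Lemma~\ref{today}) applied with $\Qq=(n+1)\Q$, i.e.\ the paper's proof of Theorem~\ref{macsamma} specialized to $d_j=1$, $\Phi=1$.
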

We will specify in Section ~\ref{macresults} how no common zeros at infinity should be interpreted.

Macaulay's Theorem, ~\cite{Mac}, corresponds to the case when $\Q=d\Sigma^n$, that is, $\deg F_j\leq d$. Then \eqref{lasse} reads $\deg (F_j G_j) \leq (n+1) d$. Macaulay's original result is in fact slightly stronger; we refer to Section ~\ref{macresults} for an exact statement. In the special case when $\Q$ is of the form $\Q=d\Sigma^n$ or more generally of the form
\begin{equation}\label{product}
\Q=d_1\Sigma^{n_1}\times\cdots\times d_r \Sigma^{n_r}
\end{equation}
we get a slightly sharper bound than \eqref{lasse}, see Theorem ~\ref{macproduct}; in particular, we get back Macaulay's result. Observe that $\supp F\subseteq \Q$, where $\Q$ is given by \eqref{product}, means that the degree in the first $n_1$ variables are bounded by $d_1$, the degree in the next $n_2$ variables are bounded by $d_2$, etc. 

\smallskip

Our next result is a sparse version of Max N\"other's $AF+BG$ Theorem, ~\cite{Not}. 
\begin{thm}\label{max}
Let $F_1,\ldots, F_m$, and $\Phi$ be polynomials in $\C [z_1,\ldots, z_n]$ and let $\Q$ be a smooth and ``large'' polytope that contains the origin and the support of $\Phi$ and the coordinate functions $z_1, \ldots, z_n$. Assume that $\Phi\in (F_1, \ldots, F_m)$ and moreover that  the codimension of the zero set of the $F_j$ is $m$ and that it has no component contained in the variety at infinity. Then there are polynomials $G_j$ such that
\begin{equation}\label{ideal}
\sum F_j G_j = \Phi
\end{equation}
and 
\begin{equation*}
\supp (F_j G_j)\subseteq \Q.
\end{equation*}
\end{thm}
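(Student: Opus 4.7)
The plan is to adapt Andersson--G\"otmark's approach \cite{AG} from projective space $\P^n$ to the smooth toric compactification $X_\Q$ of $\C^n$. I first translate the problem into line-bundle language on $X_\Q$. The polytope $\Q$ gives a torus-invariant divisor $D_\Q$ and a line bundle $L=\O(D_\Q)$ whose global sections are exactly the polynomials with support in $\Q$, so $\Phi$ extends to $\varphi\in H^0(X_\Q,L)$. Each $F_j$ lives in a line bundle $L_j=\O(D_{\Q_j})$ for some lattice polytope $\Q_j\supseteq\supp F_j$, and solving $\sum F_jG_j=\Phi$ with $\supp(F_jG_j)\subseteq\Q$ is equivalent to finding sections $g_j\in H^0(X_\Q,L\otimes L_j^{-1})$ with $\sum f_jg_j=\varphi$. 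The informal word ``large'' in the hypothesis will be made precise as the condition $\Q_j+(\Q\ominus\Q_j)\subseteq\Q$ for each $j$, together with enough positivity of $L\otimes L_j^{-1}$ to trigger the vanishing used in the final step.

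The complete intersection hypothesis enters next. Since $V_F$ has codimension $m$ and no component contained at infinity, the closures $\overline{F_j^{-1}(0)}$ in $X_\Q$ form a complete intersection whose common zero locus lies in $\C^n\subset X_\Q$. After fixing smooth Hermitian metrics on $L$ and on the $L_j$, one constructs, following \cite{A3}, the globally defined Bochner--Martinelli residue current $R^f$ of bidegree $(0,m)$ associated with the tuple $f=(f_1,\ldots,f_m)$, taking values in $\Lambda^m E^*\otimes L_1\otimes\cdots\otimes L_m$ where $E=\bigoplus_j L_j^*$. By the Duality Principle of Dickenstein--Sessa and Passare, valid since $f$ is a regular sequence, the annihilator of $R^f$ equals the ideal sheaf generated by the $f_j$; the hypothesis $\Phi\in(F_1,\ldots,F_m)$ combined with $V_F\subset\C^n$ then yields $\varphi\cdot R^f=0$ on all of $X_\Q$.

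I then apply a Koppelman-type integral representation on $X_\Q$: for a suitable weight depending holomorphically on the parameter point $z$, one obtains an identity of the form
\begin{equation*}
\varphi(z)=\int_{X_\Q}B(\zeta,z)\varphi(\zeta)+\sum_{j=1}^m f_j(z)\int_{X_\Q}K_j(\zeta,z)\varphi(\zeta)+\int_{X_\Q}K(\zeta,z)\wedge R^f\,\varphi(\zeta),
\end{equation*}
with kernels $B,K_j,K$ taking values in the appropriate twists of $L$. The residue integral vanishes because $\varphi\cdot R^f=0$. The ``largeness'' of $\Q$ is precisely the convex hypothesis that triggers a Demazure-type vanishing on the smooth toric variety $X_\Q$ and forces the Bergman-like term $\int B\varphi$ to vanish as well; then $G_j(z):=\int_{X_\Q}K_j(\zeta,z)\varphi(\zeta)$ becomes a global section of $L\otimes L_j^{-1}$, i.e.\ a polynomial with $\supp(F_jG_j)\subseteq\Q$ as desired. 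The hardest step will be the construction and analysis of this Koppelman formula on a general smooth toric variety: one must choose a toric analogue of the Bochner--Martinelli kernel adapted to the line bundles at hand, and identify the precise convex condition on $\Q$ that simultaneously triggers the required vanishing and justifies the informal adjective ``large''.
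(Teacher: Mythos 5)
Your proposal is on the right general track --- homogenize on the toric compactification $X_\Q$, represent the ideal by a residue current, and exploit the complete intersection hypothesis via duality --- but there are three places where the argument either fails or is left dangling.

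First, your claim that the common zero locus of the homogenized sections $f_j$ ``lies in $\C^n\subset X_\Q$'' is in general false and not implied by the hypotheses. The zero set $V_F\subset\C^n$ is typically noncompact, so its closure in $X_\Q$ meets $V_\infty$; the hypothesis only says no \emph{irreducible component} of $V_f$ is contained in $V_\infty$. Consequently the vanishing $\varphi\,R^f=0$ cannot be inferred on all of $X_\Q$ merely from $\Phi\in(F_1,\ldots,F_m)$ in $\C^n$. The paper separates the verification into two pieces: $\psi\,\1_{\C^n}R^f=0$ by the duality principle in $\C^n$, and $\1_{V_\infty}R^f=0$ because $R^f$ is locally a Coleff--Herrera product, hence has the Standard Extension Property, and no component of its support sits inside $V_\infty$. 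This SEP step is essential and missing from your argument.

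Second, your interpretation of ``large'' does not match what actually makes the argument work, and the condition $\Q_j+(\Q\ominus\Q_j)\subseteq\Q$ is vacuous (it holds for any $\Q,\Q_j$ by definition of Minkowski difference). What the paper needs is the vanishing
\[
H^{0,q}\Big(X_\Q,\ \O\big(D_{e\Q}-(D_{\Q_{j_1}}+\cdots+D_{\Q_{j_{q+1}}})\big)\Big)=0,\qquad 1\le q\le\min(m-1,n),
\]
which is obtained by Serre duality from the concavity of $\Psi_{D_{c\Q}+K_{X_\Q}}$; this is why ``large'' is defined precisely as $\O(D_\Q+K_{X_\Q})$ being globally generated. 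It is positivity of $D_{c\Q}+K_{X_\Q}$ (not of $L\otimes L_j^{-1}$) that one needs, and the exponent range $q\le n-1$ --- hence the crucial fact that codimension $m$ forces $m\le n$ --- is what allows Serre duality to apply.

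Third, instead of constructing a Koppelman-type integral formula on $X_\Q$ from scratch (which you acknowledge is the hardest and unresolved step), the paper invokes the abstract division theorem (Theorem 2.3 of \cite{A3}, stated here as Theorem~\ref{abas}): once $\psi R^f=0$ and the cohomology groups above vanish, a global $g$ with $\delta_f g=\psi$ exists. That theorem internalizes the $\dbar$-solving and makes an explicit toric Bochner--Martinelli kernel unnecessary. Your route would work in principle if the Koppelman formula were built, but it is strictly harder than the argument the paper uses, and without pinning down the correct cohomology vanishing condition the Bergman term $\int B\varphi$ will not in general disappear.
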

It will be specified in Section ~\ref{maxresults} what we mean by that the zero set of the $F_j$ has no component contained in the variety at infinity and by that the polytope is ``large''; in particular, $\poly$ is large if it is of the form $(n+1)$ times a lattice polytope. Theorem ~\ref{max} also holds if $\Q$ is of the form \eqref{product}. In particular, if $m=n$ and $\Q=(\deg \Phi)\Sigma^n$ we get back N\"other's original result ~\cite{Not}: 

\noindent
\emph{
Assume that the zero-set of $F_1,\ldots, F_n$ is discrete and contained in $\C^n$ and that $\Phi\in(F_1,\ldots, F_n)$. Then, there are  $G_j$ that satisfy \eqref{ideal} and $\deg(F_j G_j)\leq \deg\Phi$.
}

If $\Q=(\deg \Phi) \Sigma^n$ but we drop the condition $m=n$, then the corresponding result appeared as Theorem 1.2 in ~\cite{A3}. 

\smallskip

In general, the $F_j$ have common zeros at infinity. The following is a sparse version of a result by Andersson-G\"otmark, \cite[Theorem~1.3]{AG}, which generalizes N\"other's Theorem to the situation when there are no restriction on the zeros of the $F_j$ at infinity. 

\begin{thm}\label{agsats}
Let $F_1,\ldots, F_m$, and $\Phi$ be polynomials in $\C [z_1,\ldots, z_n]$, let $\Q$ be a smooth polytope that contains the origin and the Newton polytope of $F_1, \ldots, F_m, z_1, \ldots, z_n$, and let $a$ denote the minimal side length of $\Q$. Assume that the codimension of the common zero set of $F_1, \ldots, F_m$ in $\C^n$ is $m$, that $\Phi\in(F_1, \ldots, F_m)$, and that $\supp \Phi \subseteq e \Q$, where $e\Q$ is a lattice polytope. Then there are polynomials $G_j$ that satisfy \eqref{ideal} and 
\begin{equation}\label{aguppsk}
\supp (F_j G_j)\subseteq \lceil e + m \vol(\Q)/a \rceil \Q.
\end{equation}
\end{thm}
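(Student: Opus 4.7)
The plan is to work on the smooth toric compactification $X_\Q$ of $\Cn$ associated to $\Q$ and to mimic the residue-current strategy of Andersson-G\"otmark \cite{AG}. Let $L:=\O(D_\Q)$; the assumption $\supp F_j\subseteq \Q$ says that each $F_j$ is a global section of $L$, and similarly $\Phi\in H^0(X_\Q,L^{\otimes e})$ because $\supp\Phi\subseteq e\Q$. I would then form the Andersson residue current $R^F=R_m^F+R_{m+1}^F+\cdots$ attached to $F=(F_1,\ldots,F_m)$ viewed as a holomorphic section of $L^{\oplus m}\to X_\Q$. The complete-intersection hypothesis in $\Cn$ together with the duality principle forces $R_m^F\cdot\Phi=0$ there, so the only potentially non-zero contributions to $R^F\Phi$ come from components $R_k^F$ with $k>m$, and these are supported on the toric boundary $D_\infty=X_\Q\setminus(\C^*)^n$.

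Next I would apply a Koppelman-type integral representation on $X_\Q$, with a Hermitian weight taking values in $L^{\otimes N}$ for a suitable $N$ to be chosen. Combined with the currents calculus of \cite{A3,AG} this produces a decomposition
\[
\Phi=\sum_{j=1}^m F_j G_j + T,
\]
where $T$ is a finite sum of integrals of the form $\int_{X_\Q}\alpha_k\wedge R_k^F\Phi$ with $k>m$, each supported on $D_\infty$, and where $G_j\in H^0(X_\Q,L^{\otimes (N-1)})$. Translation back to $\Cn$ will then give $\supp(F_j G_j)\subseteq N\Q$, provided $T$ can be arranged to vanish. To kill $T$ I would choose the weight to vanish to high enough order along each boundary divisor: each facet of $\Q$ corresponds to a torus-invariant divisor in $D_\infty$, and any section of $L^{\otimes \mu}$ whose Newton polytope lies in $\mu\Q$ automatically vanishes along that divisor to an order growing at least like $a\mu$, where $a$ is the minimal side length of $\Q$.

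Finally I would quantify how much vanishing is needed. A local analysis of $R_k^F$ at each torus-invariant point of $X_\Q$, using the toric Koszul complex and the fact that the normal directions to each boundary stratum are controlled by the fan of $\Q$, should bound the pole order of $R_k^F$ ($k>m$) along the components of $D_\infty$ by a quantity of size $m\vol(\Q)$; dividing by $a$ then shows that $\mu=\lceil m\vol(\Q)/a\rceil$ extra copies of $\Q$ suffice to absorb all the boundary singularities, so $N=\lceil e+m\vol(\Q)/a\rceil$ will do, yielding \eqref{aguppsk}. The main obstacle is precisely this last estimate: translating the intersection-theoretic quantity $\vol(\Q)/a$, morally the maximum degree $D_\Q^{n-1}\cdot E_\rho$ over torus-invariant curves $E_\rho$ of $X_\Q$, into a sharp pointwise bound on the residue components $R_k^F$ along the boundary strata, and checking that this vanishing really does annihilate the obstruction $T$ in the integral formula.
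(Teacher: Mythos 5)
Your broad plan is the right one and matches the paper's strategy in spirit: pass to the smooth toric compactification $X_\Q$, view the $F_j$ as sections of $\O(D_\Q)$ and $\Phi$ as a section of $\O(D_{e\Q})$, form Andersson's residue current, split the annihilation condition into an affine part (handled by the complete-intersection/duality principle) and a boundary part (handled by forcing enough vanishing along $V_\infty$). However, there are both a conceptual error and a genuine gap.

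The conceptual error concerns the structure of $R^F$. Since $E=L^{\oplus m}$ has rank $m$, the current $R^F$ takes values in $\Lambda E^*$ and therefore has no components $R^F_k$ with $k>m$; also, because the codimension hypothesis forces $m\leq n$, the top component is $R^F_m$. So the boundary obstruction is not ``$R^F_k$ for $k>m$''. Rather, the relevant decomposition is $R^F=\1_{\C^n}R^F+\1_{V_\infty}R^F$ (the residue current is pseudomeromorphic, so such restrictions make sense), and \emph{all} the components $R^F_k$ with $\codim V_f\leq k\leq m$ can a priori contribute to $\1_{V_\infty}R^F$. The affine piece $\1_{\C^n}R^F$ is a Coleff--Herrera product (by part (b) of Andersson's theorem, since $\codim=m$ in $\C^n$), and that is what duality kills. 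Related to this, your statement that ``any section of $L^{\otimes\mu}$ whose Newton polytope lies in $\mu\Q$ automatically vanishes along $D_\infty$ to order $\sim a\mu$'' is wrong as stated: a general section of $\O(D_{\mu\Q})$ does not vanish at all along parts of $V_\infty$. What does vanish is the specific twist factor, namely the $c\Q$-homogenization $\tilde 1^{\,c-e}$ appearing when one rehomogenizes a polynomial with $\supp\subseteq e\Q$ as a section of $\O(D_{c\Q})$; its vanishing order is $(c-e)a_\infty$, with $a_\infty\geq a$ (this is the content of the paper's Remark~\ref{kvall}).

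The genuine gap is the one you yourself flag at the end: why does vanishing to order $m\vol(\Q)$ along $V_\infty$ actually annihilate $\1_{V_\infty}R^F$? The ``toric Koszul complex'' hand-wave does not supply an argument. The paper's proof fills this by Lemma~\ref{oandligheten} combined with Remark~\ref{rremark}: pull $R^F$ back to the normalized blow-up $\pi\colon X^+\to X_\Q$ of the ideal sheaf $\a=(f_1,\ldots,f_m)$, where $\pi^*f=f_0f'$ with $f'$ nonvanishing and $f_0$ cutting out a divisor $D=\sum r_jD_j$; then $R^+=\pi^*(\dbar|f|^{2\lambda}\wedge u)|_{\lambda=0}$ has the SEP, and vanishing of $\pi^*\psi$ to order $\min(m,n)\,r_j$ on each $D_j$ over $V_\infty$ kills $\1_{D_j}R^+$. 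The bound $r:=\max_j r_j\leq\deg_{D_\Q}(X_\Q)=\vol(\Q)$ then comes from Lazarsfeld's intersection-theoretic estimate (Proposition 10.5.5 in \cite{Laz}), using that $\O_{X_\Q}(D_\Q)\otimes\a$ is globally generated and $D_\Q$ is ample. Without this chain of reasoning there is no proof. Finally, note that the paper does not need to set up explicit Koppelman kernels on $X_\Q$: it invokes the abstract reduction (Theorem~\ref{arkesatsen}, coming from \cite[Theorem 2.3]{A3}) whose hypothesis is the Dolbeault vanishing $H^{0,q}(X_\Q,\O(D_{(c-q-1)\Q}))=0$, which follows from Lemma~\ref{today}. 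Your Koppelman-weight approach, if carried out, must verify an equivalent condition, and you have not addressed it.
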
 
By the minimal side length of $\Q$ we mean the length of the shortest edge of $\Q$. For example, if $\Q=d\Sigma$, then $a=d$. Thus with $\Q=d\Sigma^n$
\eqref{aguppsk} reads
\[ 
\deg (F_j G_j)\leq (\deg \phi/d+m d^n/d)d=(\deg\phi +m d^{n}),
\]
which is Andersson-G\"otmark's result in the case when the degrees of the $F_j$ are bounded by $d$ and $m=n$. Their result is more precise; in particular, it allows for the $F_j$ to have different degrees and $d^n$ in the estimate should be replaced by $d^{\min(m,n)}$. 

\smallskip

Recall that $\Phi$ lies in the \emph{integral closure} $\overline{(F)}$ of $(F)=(F_1,\ldots, F_m)$ if $\Phi$ satisfies a monic equation $\Phi^r+H_1 \Phi^{r-1} + \cdots + H_r=0$, where $H_j\in(F)^j$ for $1\leq j\leq r$, or, equivalently, if $\Phi$ locally satisfies $|\Phi|\leq C|F|$, where $|F|^2=|F_1|^2+\cdots + |F_m|^2$. If $\Phi\in \overline{(F)}$, then the Brian{\c c}on-Skoda Theorem, ~\cite{BS}, asserts that one can solve \eqref{hilbert} with $\nu=\min(m,n)$. The following is a sparse versions of an effective Brian{\c c}on-Skoda Theorem due to Hickel ~\cite[Theorem~1.1]{H}, see also Ein-Lazarsfeld ~\cite[p.~430]{EL}. 

\begin{thm}\label{hickelsats}
Let $F_1,\ldots, F_m$, and $\Phi$ be polynomials in $\C [z_1,\ldots, z_n]$, let $\Q$ be a smooth polytope that contains the origin and the Newton polytope of $F_1, \ldots, F_m, z_1, \ldots, z_n$, and let $a$ denote the minimal side length of $\Q$. Assume that $\Phi$ is in the integral closure of $(F_1, \ldots, F_m)$ and that $\supp \Phi \subseteq e \Q$, where $e\Q$ is a lattice polytope. Then there are polynomials $G_j$ such that 
\begin{equation}\label{bsekv}
\sum_{j=1}^m F_j G_j = \Phi^{\min(m,n)}
\end{equation}
and 
\begin{equation}\label{hickeluppsk}
\supp (F_j G_j)\subseteq \max\left (\lceil \min(m,n) (e+\vol (\Q)/a)\rceil, \min(m,n+1)\right ) \Q.
\end{equation}
\end{thm}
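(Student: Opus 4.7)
The plan is to adapt the residue-current proof strategy used for Theorems \ref{max} and \ref{agsats} to the present setting, the essential new ingredient being a Brian\c con--Skoda-type annihilation of the residue current by a power of $\Phi$.

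As in the proofs of the preceding theorems, I would work on the smooth toric variety $X_\Q$ (smooth by hypothesis) with the line bundle $L = \O(D_\Q)$, whose global sections of $L^k$ correspond to polynomials with support in $k\Q$. Because $\Q$ contains the origin and the Newton polytopes of $F_1,\ldots,F_m,z_1,\ldots,z_n$, each $F_j$ extends to a global section of $L$ and, since $\supp \Phi \subseteq e\Q$, $\Phi$ extends to a section of $L^e$. The first step is to form the Koszul-type residue current $R^F = \sum_{k=1}^{\mu} R^F_k$ for $\mu := \min(m,n)$ associated to $F = (F_1,\ldots,F_m)$ on $X_\Q$, following the toric framework set up earlier in the paper; each $R^F_k$ is a $(0,k)$-current taking values in an appropriate bundle built from $L^{-1}$.

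The crux of the argument is the global annihilation
\[
\Phi^{\mu} R^F \;=\; 0.
\]
This reduces to a local statement: by the classical Brian\c con--Skoda theorem the integral closure hypothesis forces $\Phi^k$ to annihilate $R^F_k$ locally for $1 \le k \le \mu$, and hence $\Phi^{\mu}$ annihilates the whole current. Next I would apply a Koppelman-type integral representation on $X_\Q$ of the kind already used for Theorems \ref{max} and \ref{agsats} to obtain a division formula
\[
\Phi^{\mu} \;=\; \sum_{j=1}^{m} F_j G_j \;+\; T,
\]
with an obstruction $T$ lying in cohomology classes $H^{k}(X_\Q, L^{N-k})$ (or appropriate twists) for $k \geq 1$, where $N$ is the twist of $L$ in which we wish to realize the $G_j$. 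Demazure vanishing on the smooth projective toric variety $X_\Q$ kills $T$ once $N$ is sufficiently large, so the question becomes how large $N$ must be.

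To obtain the stated bound I would track the degrees as in the proof of Theorem \ref{agsats}: the section $\Phi^{\mu}$ belongs to $L^{\mu e}$, which accounts for the $\mu e$ part of the estimate, while controlling the contribution of the residue components of bidegree up to $(0,\mu)$ against Koppelman kernels on $X_\Q$ requires an additional twist of size $\mu\vol(\Q)/a$ -- the same quantity $\vol(\Q)/a$ that produced the bound in Theorem \ref{agsats}, now scaled by $\mu$ because we have currents up to degree $\mu$ rather than a single degree-one obstruction. Combined, these give $N \le \lceil \mu(e+\vol(\Q)/a)\rceil$. The alternative lower bound $\min(m,n+1)$ in \eqref{hickeluppsk} is handled separately: it reflects the minimal positivity of $L^N$ needed for the division formula itself (respectively the Koszul complex) to be globally defined, and is only relevant when $e + \vol(\Q)/a$ is so small that the first estimate would not suffice.

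The main obstacle is the sharp bookkeeping in the last paragraph: one must simultaneously verify the global annihilation $\Phi^{\mu} R^F = 0$ on $X_\Q$ (and not merely on $\C^n$, so that the behaviour of $\Phi^{\mu}$ at the toric boundary is controlled by the $e\Q \subset N\Q$ containment), track the line bundle twists of all components in the Koppelman formula, and extract precisely the factor $\vol(\Q)/a$ from the required cohomology vanishing -- rather than a cruder quantity -- so that in the classical case $\Q = d\Sigma^n$ the bound reduces to Hickel's original $\deg\Phi\cdot\mu + \mu d^{\mu}$.
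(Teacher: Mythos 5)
Your proposal correctly identifies the general framework: work on $X_\Q$, homogenize $F_j$ and $\Phi^\mu$ to sections, form the Bochner--Martinelli residue current, check annihilation, then invoke cohomology vanishing to pass from a local to a global solution. This matches the structure of the paper's argument (which applies Theorem~\ref{arkesatsen} rather than writing out a Koppelman formula, but these are the same technology). However, there is a genuine gap in how you obtain the annihilation $\psi R^f=0$ on all of $X_\Q$, and you misattribute the source of the key numerical factor $\mu\vol(\Q)/a$.

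The Brian\c{c}on--Skoda argument you describe (local annihilation of $R^f_k$ by $\Phi^k$) gives $\psi\1_{\C^n}R^f=0$ only, because the integral-closure hypothesis $\Phi\in\overline{(F)}$ is a statement about the polynomial ring, i.e.\ about the affine chart $\C^n=\U_{\sigma_0}$. It tells you nothing directly about the homogenized section $\psi$ at the boundary $V_\infty$, where the local expressions of $f_1,\ldots,f_m$ and $\psi$ are completely different. The paper handles this by a separate mechanism: by Remark~\ref{kvall}, the $c\Q$-homogenization of $\Phi^\mu$ (where $\supp\Phi^\mu\subseteq \mu e\Q$) vanishes to order at least $(c-\mu e)a_\infty\geq (c-\mu e)a$ along $V_\infty$; choosing $c\geq\lceil\mu(e+\vol(\Q)/a)\rceil$ therefore guarantees vanishing to order $\geq\mu\vol(\Q)$, and then Lemma~\ref{oandligheten} combined with the estimate $r\leq\vol(\Q)$ from Remark~\ref{rremark} gives $\psi\1_{V_\infty}R^f=0$. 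That lemma rests on pulling the current back to the normalized blow-up of the ideal sheaf, factoring out the exceptional divisor, and using the Standard Extension Property -- none of which appears in your sketch. Thus the factor $\mu\vol(\Q)/a$ is \emph{not} an artifact of "controlling residue components of bidegree up to $(0,\mu)$ against Koppelman kernels"; it is exactly what forces sufficient vanishing of $\psi$ at infinity. You flag the boundary behavior as the "main obstacle," which is honest, but since this is precisely where the theorem's specific estimate is won, the proposal as written does not constitute a proof; it reduces the theorem to the hardest step. The alternative lower bound $\min(m,n+1)$, on the other hand, you describe correctly: it is what makes $c\Q-(q+1)\Q$ effective so that Lemma~\ref{today} yields the required cohomology vanishing \eqref{arke}.
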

In most cases $\lceil \min(m,n) (e+\vol (\Q)/a)\rceil$ is much larger than $\min(m,n+1)$. In fact, $\min(m,n+1)$ is the largest only when $\Q=\Sigma^n$ and $e=0$.

If $\Q=d\Sigma^n$, then \eqref{hickeluppsk} reads 
$\deg(F_j G_j) \leq \min(m,n) (\deg \Phi+d^n)$, which is precisely Hickel's result, provided $m\geq n$. Hickel's original formulation is more precise, taking into account different degrees of the $F_j$; also, $d^n$ in the estimate should be replaced by $d^{\min(m,n)}$.

\smallskip

Finally, we have the following sparse Nullstellensatz. 
\begin{thm}\label{kollarsats}
Let $F_1,\ldots, F_m$, and $\Phi$ be polynomials in $\C [z_1,\ldots, z_n]$, let $\Q$ be a smooth polytope that contains the origin and the Newton polytope of $F_1,\ldots, F_m, z_1, \ldots, z_n$, and let $a$ denote the minimal side length of $\Q$. Assume that $\Phi$ vanishes on the zero set of the $F_j$ and that $\supp\Phi\subseteq e \Q$, where $e\Q$ is a lattice polytope. Then there are polynomials $G_j$ such that 
\begin{equation}\label{kollarekv}
\sum F_j G_j = \Phi^{\min(m,n) \vol(\Q)}
\end{equation}
and 
\begin{equation}\label{hus}
\supp (F_j G_j) \subseteq  
\max (\lceil \min(m,n) (1/a+e) \vol (\Q)\rceil, \min (m,n+1)) \Q.
\end{equation}
\end{thm}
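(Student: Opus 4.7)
The near-identity of the bounds in Theorem \ref{hickelsats} and Theorem \ref{kollarsats} -- the latter becomes the former upon replacing $e$ by $e\vol(\Q)$ -- suggests deriving Theorem \ref{kollarsats} from Theorem \ref{hickelsats} by raising $\Phi$ to the power $\vol(\Q)$. Accordingly, the plan is to first establish the sparse Nullstellensatz \emph{into the integral closure},
\[
\Phi^{\vol(\Q)}\in \overline{(F_1,\ldots,F_m)},
\]
and then feed $\Psi:=\Phi^{\vol(\Q)}$ into Theorem \ref{hickelsats}.

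Granting the integral-closure statement, the reduction is automatic. Since $\supp\Psi\subseteq e\vol(\Q)\,\Q$, Theorem \ref{hickelsats} applied to $\Psi$ (with $e\vol(\Q)$ playing the role of $e$) produces polynomials $G_j$ with $\sum_j F_j G_j=\Psi^{\min(m,n)}=\Phi^{\min(m,n)\vol(\Q)}$, which is \eqref{kollarekv}, and with
\[
\supp(F_jG_j)\subseteq \max\bigl(\lceil\min(m,n)(e\vol(\Q)+\vol(\Q)/a)\rceil,\,\min(m,n+1)\bigr)\Q,
\]
which is exactly \eqref{hus} since $e\vol(\Q)+\vol(\Q)/a=(1/a+e)\vol(\Q)$.

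The essential content is then the exponent $\vol(\Q)$ in the integral-closure claim. On the toric compactification $X_\Q$ of Section \ref{toric} the $F_j$ become global sections of $\O(D_\Q)$, and the top self-intersection of $D_\Q$ equals $\vol(\Q)$, so by the Bernstein--Kushnirenko theorem the total intersection degree of $m$ generic such sections is at most $\vol(\Q)$. Using the Brian\c{c}on--Skoda theorem locally at each isolated component of the common vanishing of $(F)$ in $\C^n$, where $\Phi$ is assumed to vanish, and summing the local Samuel multiplicities, one obtains $\Phi^{\vol(\Q)}\in\overline{(F)}$, provided the relevant residue current attached to the $F_j$ is supported on $\C^n$.

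The main obstacle, as in Theorems \ref{agsats} and \ref{hickelsats}, is the boundary $X_\Q\setminus\C^n$, which carries components of the common zero locus that are a priori invisible to the hypothesis on $\Phi$. The remedy is the reason for the assumption that $\Q$ contains the origin and the supports of $z_1,\ldots,z_n$: the coordinate functions $z_i$ furnish global sections of $\O(D_\Q)$ vanishing on the toric boundary, so one can incorporate them into the residue current construction -- exactly as in the proof of Theorem \ref{hickelsats} -- to force the current's support into $\C^n$, at the price of the $\vol(\Q)/a$ shift already visible in \eqref{hickeluppsk}. This shift is precisely what is absorbed in the exponent $(1/a+e)\vol(\Q)$ of \eqref{hus}, so the reduction closes the argument.
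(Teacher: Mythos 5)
Your reduction to Theorem \ref{hickelsats} is arithmetically consistent: if one knew that $\Phi^{\vol(\Q)}\in\overline{(F_1,\ldots,F_m)}$, then applying Theorem \ref{hickelsats} to $\Psi=\Phi^{\vol(\Q)}$, with $e\vol(\Q)$ in the role of $e$, would indeed yield \eqref{kollarekv} and \eqref{hus}. But that integral-closure statement is the entire content of the theorem (it is precisely a sparse geometric effective Nullstellensatz, the Ein--Lazarsfeld-type statement alluded to in the introduction), and your justification of it does not hold up. Brian\c{c}on--Skoda goes in the opposite direction: from $\Phi\in\overline{(F)}$ it gives $\Phi^{\min(m,n)}\in(F)$; it cannot produce a power of $\Phi$ \emph{inside} $\overline{(F)}$ from the mere set-theoretic vanishing of $\Phi$ on $V_F$. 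The common zero set of the $F_j$ in $\C^n$ need not have isolated components, so ``summing local Samuel multiplicities'' is not available, and Bernstein--Kushnirenko counts zeros of \emph{generic} systems, whereas the $F_j$ here are arbitrary. Finally, the proviso that ``the relevant residue current attached to the $F_j$ is supported on $\C^n$'' cannot be arranged: the support of $R^f$ is the zero set $V_f$ in $X_\Q$, which in general meets $V_\infty$, and this is not how the proof of Theorem \ref{hickelsats} treats infinity. There the current is the one attached to the $F_j$ alone; the boundary is handled because the $\Qq$-homogenization $\psi$ vanishes to order at least $\min(m,n)\vol(\Q)$ along $V_\infty$ (Remark \ref{kvall}), combined with Lemma \ref{oandligheten} and the multiplicity bound $r\leq\vol(\Q)$ of Remark \ref{rremark}; the coordinate functions are never ``incorporated into the residue current construction.''

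A correct proof of $\Phi^{\vol(\Q)}\in\overline{(F)}$ requires exactly the ingredients the paper already uses: the multiplicities $r_j$ of the divisors of the normalized blow-up of the ideal and the estimate $\max_j r_j\leq\deg_{D_\Q}(X_\Q)=\vol(\Q)$ of Remark \ref{rremark}, applied to the divisors lying over $\C^n$, on which $\pi^*\Phi$ vanishes. The paper bypasses the detour entirely: it applies Theorem \ref{arkesatsen} with $\Qq=c\Q$, $c=\max(\lceil\min(m,n)(1/a+e)\vol(\Q)\rceil,\min(m,n+1))$, verifies the cohomological condition \eqref{arke} by Lemma \ref{today}, and shows $\psi R^f=0$ for $\psi$ the homogenization of $\Phi^{\min(m,n)\vol(\Q)}$ by noting that $\psi$ vanishes to order at least $\min(m,n)\vol(\Q)\geq\min(m,n)\,r$ along all of $V_f$ --- in $\C^n$ because $\Phi$ vanishes on $V_F$, and along $V_\infty$ because of the homogenization --- so that condition \eqref{storlek}, part (c) of Andersson's theorem in Section \ref{rescurr}, and Lemma \ref{oandligheten} apply. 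So the reduction you propose, once its key step is proved properly, amounts to rerunning the same argument rather than shortening it; as written, the key step is a genuine gap.
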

Note that in most cases $\lceil \min(m,n) (e+1/a) \vol (\Q) \rceil$ is much larger than $\min(m,n+1)$. As above, $\min(m,n+1)$ is the largest only if $\Q=\Sigma^n$ and $e=0$. 

If  $\Q=d\Sigma^n$, then \eqref{hus} reads 
$\deg (F_j G_j) \leq \min (m,n)(1+\deg \Phi) d^n$. 
Moreover the exponent in \eqref{kollarekv} is $\min(m,n)d^n$, so if $n\geq m$ we get back Koll{\'a}r's result modulo a factor $n$ in the exponent $\nu$ in \eqref{hilbert} and in the degree estimate \eqref{kollupp}. Because of the factor $1/a$, Theorem ~\ref{kollarsats} slightly improves Sombra's result when $\Q$ is smooth. Also from a modified version of Theorem ~\ref{kollarsats} we recover Rojas' example ~\cite[Example~2]{EL}, see Section ~\ref{inftyresults}.

We will provide a proof of Theorem ~\ref{kollarsats} using residue currents. However, this result should be possible to conclude from Ein-Lazarsfeld's Geometric Effective Nullstellensatz \cite{EL}, cf. ~\cite[Example~2]{EL}, although we get a slightly better coefficient: a factor $\min(m,n)$ instead of $\min(m,n+1)$.

\smallskip

Let us sketch the idea of the proofs of our results. A standard way of reformulating the kind of division problems we consider is the following. There are polynomials $G_j$ that satisfies \eqref{hilbert} and $\supp (F_j G_j)\subseteq c \poly$ if and only if there are sections $g_j$ of line bundles $\O(D_{(c-1)\poly})$ over $X_\poly $ such that 
\begin{equation}\label{mja}
\sum_{j=1}^m f_j g_j = \psi,
\end{equation}
where $f_j$ and $\psi$ are sections of line bundles $\O(D_\poly)$ and $\O(D_{c\poly})$ over $X_\poly$ corresponding to $F_j$ and $\Phi^\nu$, respectively. In ~\cite{A} it was shown that $\psi$ solves \eqref{mja} locally on $X_\poly$ if $\psi$ annihilates the so-called Bochner-Martinelli residue current $R^f$ of $f_1,\ldots, f_m$, see Section ~\ref{rescurr}. 
To obtain a global solution to \eqref{mja} the constant $c$ has to be large enough so that certain Dolbeault cohomology on $X_\poly$ vanishes. By analyzing when these conditions are satisfied we obtain our results. In general, $\psi$ annihilates $R^f$ if it vanishes to high enough order along the zero set $V_f$ of $f_1,\ldots, f_m$; this is used to prove Theorems ~\ref{kollarsats} and ~\ref{hickelsats}. Ein-Lazarsfeld ~\cite{EL} (as well as Brownawell ~\cite{Br}) used Skoda's Theorem \cite{Skoda} to obtain analogous results. If the codimension of $V_f$ is $m$, we have a more refined estimate of when $R^f$ is annihilated, which makes it possible to get results such as Theorems \ref{max} and \ref{agsats}.

The somewhat unsatisfactory assumption in most of our results that the polytope $\Q$ is smooth is explained by the fact that the use of residue current techniques limits us to work on smooth toric varieties, cf. Remark \ref{dagas}. The Bochner-Martinelli residue current can actually be defined also on singular varieties; it will however not have as nice properties as in the smooth case, cf. ~\cite{BVY, Larkang}. It would be interesting to investigate the general situation more carefully.

The organization of this paper is as follows. In Sections ~\ref{rescurr} and ~\ref{toric} we provide some necessary background on residue currents and toric varieties, respectively. In Section ~\ref{arkesektion} we present a basic result, which essentially is a toric interpretation of Theorem 2.3 in ~\cite{A3}. Based on this we prove Theorems ~\ref{mac}-\ref{kollarsats} in Section ~\ref{results}, in which we also provide slightly more general formulations and consider the special case when $\Q$ is of the form \eqref{product}. Finally, in Section ~\ref{exs} we compare our results to previous work, interpret them in terms of usual degree bounds and give some examples. 

\textbf{Acknowledgment:} I would like to thank Mats Andersson, S{\'e}bastien Boucksom, Mircea Musta\c{t}\u{a}, Alexey Shchuplev, and Mart{\'i}n Sombra for fruitful discussions. Thanks to Maurice Rojas for pointing out the reference to the work of Castryck et al. Also thanks to the referee for careful reading and for many important remarks and helpful suggestions. This work was partially carried out when the author was visiting Institut Mittag-Leffler.

\section{Residue currents}\label{rescurr}
Let $f_1, \ldots, f_m$ be holomorphic functions whose common zero set $V_f$ has codimension $m$. Then the \emph{Coleff-Herrera product}, introduced in ~\cite{CH},
\begin{equation}\label{coleff}
R^f_{CH} = \dbar \left [\frac{1}{f_1}\right ]\wedge \cdots \wedge \dbar \left [\frac{1}{f_m}\right ],
\end{equation}
represents the ideal $(f)$ generated by the $f_j$ in the sense that it has support on $V_f$ and moreover if $\psi$ is a holomorphic function, then $\psi\in (f)$ locally if and only if $\psi R^f_{CH}=0$, see ~\cite{DS, P}.

Passare-Tsikh-Yger, \cite{PTY}, constructed residue currents by means of the Bochner-Martinelli kernel that generalize the Coleff-Herrera product to when the codimension of $V_f$ is arbitrary. Their construction was later developed by Andersson \cite{A}. We will use his global construction. 

\begin{thm}[Andersson  ~\cite{A}]
Let $f$ be a holomorphic section of a Hermitian vector bundle $E$ of rank $m$ over a complex manifold $X$ of dimension $n$. Then one can construct a $(\Lambda(E^*)$-valued) residue current $R^f$ on $X$, which has support on the zero locus $V_f$ of $f$ and satisfies: 
\begin{enumerate}
\item[(a)]
If $\psi$ is holomorphic on $X$ and 
$\psi R^f =0$,  
then $\psi$ is locally in the ideal $(f)$ generated by $f$. 
\item[(b)]
If $\codim V_f =m$ then $R^f$ is locally equal to a Coleff-Herrera product \eqref{coleff}; in particular, $\psi R^f=0$ if and only if $\psi\in (f)$ locally.
\item[(c)]
If $\psi$ locally satisfies
\begin{equation}\label{storlek}
|\psi|\leq C |f|^{\min (m,n)}
\end{equation}
for some constant $C$, then $\psi R^f=0$.
\end{enumerate}
\end{thm}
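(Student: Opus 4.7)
The plan is to follow the Koszul complex construction of \cite{A}. Using the Hermitian metric on $E$, I would first form the dual section $s$ of $E^*$ given by $s = f^*/|f|^2$, well-defined and smooth on $X \setminus V_f$ (here $f^*$ denotes the image of $f$ under the antilinear metric isomorphism $E \to E^*$). The key building block is
\begin{equation*}
u = \sum_{k=1}^{m} u_k, \qquad u_k = s \wedge (\dbar s)^{k-1},
\end{equation*}
where $u_k$ has bidegree $(0,k-1)$ and takes values in $\Lambda^k E^*$, and satisfies $\nabla_f u = 1$ off $V_f$ for the superconnection $\nabla_f = \delta_f - \dbar$. To extend $u$ across $V_f$, I would consider the Mellin-type family $\lambda \mapsto |f|^{2\lambda} u$, which is holomorphic in $\lambda$ for $\Re \lambda \gg 0$, and extend it meromorphically to a neighborhood of $\lambda = 0$ via a Hironaka resolution $\pi \colon \tilde X \to X$ that puts $(f_1,\dots,f_m)$ in normal-crossings form; this reduces the pole structure to the model one-variable case $|z|^{2\lambda} \dbar(1/z)$. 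Then $U^f$ is defined as the value at $\lambda = 0$ of the analytic continuation, and $R^f := 1 - \nabla_f U^f$; by construction $R^f$ has support on $V_f$.

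For (a), I would use that a holomorphic $\psi$ commutes with $\nabla_f$ up to sign, so $\psi R^f = 0$ yields $\psi = \nabla_f(\psi U^f)$. Working locally on a ball of holomorphy, where Dolbeault cohomology of currents vanishes, the plan is to iteratively solve $\dbar$-equations to descend from this $\nabla_f$-primitive to a genuine holomorphic Koszul relation $\psi = \sum_j f_j g_j$ with holomorphic $g_j$, i.e.\ the division principle at the core of \cite{A}.

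For (b), the plan in codimension $m$ is to identify $R^f$ with $R^f_{CH}$. On the Hironaka resolution, the lower-degree components $u_1,\dots,u_{m-1}$ pull back to forms whose pole locus has too small codimension to produce a residue at $\lambda = 0$, so only the top term $u_m$ contributes. A Mellin-transform computation in the style of Passare-Tsikh-Yger then identifies $R^f$ chart by chart with the classical iterated residue of Coleff-Herrera, and the annihilator characterization follows from the duality theorems of \cite{DS, P}. For (c), integrability does the work: if $|\psi| \leq C|f|^{\min(m,n)}$, then each $\psi u_k$ with $k \leq \min(m,n)$ is in $L^1_{\loc}$ across $V_f$ (while higher-degree components vanish a priori for bidegree reasons when $m > n$). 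Consequently $\lambda \mapsto |f|^{2\lambda} \psi u$ is regular at $\lambda = 0$ with value the ordinary product $\psi u$, whence $\psi R^f = \nabla_f(\psi U^f) - \psi \cdot 1$ reduces to a current computation yielding zero.

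The main obstacle will be (b): reconciling Andersson's analytic-continuation definition with the classical iterated residue of Coleff-Herrera is the technically subtle core, requiring a careful case analysis of pullbacks on the embedded resolution of the ideal $(f_1,\dots,f_m)$. Part (c) is the cleanest, being essentially an integrability estimate, and part (a) is a direct consequence of the homotopy formula $\nabla_f u = 1$ together with local $\dbar$-solvability.
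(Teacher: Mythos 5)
This theorem is cited from Andersson's paper \cite{A} rather than proved in the present paper, so your proposal is best read as a reconstruction of that argument; your framework --- the Koszul weight $u=\sum_k u_k$, the Mellin/Hironaka continuation, and $R^f = 1 - \nabla_f U^f$ --- matches the construction sketched in Section~\ref{rescurr} (up to the harmless normalization $s\mapsto s/|f|^2$), and parts (a) and (b) name the right ingredients. One imprecision in (b): the lower-degree pieces $R^f_k$ for $k<m$ do not vanish because ``the pole locus has too small codimension'' on the resolution (there the pole locus is a divisor, and residues genuinely appear); they vanish because, after pushing forward, $R^f_k$ is a pseudomeromorphic $(0,k)$-current supported on $V_f$ of codimension $m>k$, and the dimension principle for such currents forces it to be zero.

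The genuine gap is in (c). You argue that $|\psi|\leq C|f|^{\min(m,n)}$ makes $\psi u_k$ locally integrable, that therefore $\psi U^f$ equals the ordinary product $\psi u$, and that $\psi R^f = \psi - \nabla_f(\psi U^f)$ then ``reduces to a current computation yielding zero.'' But local integrability of $\psi u$ does not imply $\nabla_f(\psi u)=\psi$ as currents: the distributional $\dbar$ of the $L^1_{\loc}$ extension can itself carry a residue along $V_f$, and that residue is exactly $\psi R^f$, so the argument is circular. Concretely, with $n=m=1$, $f=z^2$, $\psi=z$, one has $\psi u = 1/z\in L^1_{\loc}$, yet $\psi R^f = z\,\dbar[1/z^2]$ is a nonzero multiple of $\delta_0$ (here $|\psi|\leq C|f|$ fails, which is what saves the theorem, but the example shows integrability alone is not the mechanism). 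The argument that actually closes (c) goes through the resolution $\pi$: writing $\pi^*f = f_0 f'$ with $f'$ nonvanishing, $\pi^*R^f$ is a sum $\sum_{k\leq\min(m,n)}\dbar[1/f_0^k]\wedge\alpha_k$ with $\alpha_k$ smooth; the hypothesis gives $|\pi^*\psi|\leq C'|f_0|^{\min(m,n)}$, so $\pi^*\psi$ is holomorphically divisible by $f_0^{\min(m,n)}$ and annihilates every $\dbar[1/f_0^k]$. This divisibility-on-the-resolution step is precisely what the paper itself invokes in the proof of Lemma~\ref{oandligheten}, and it is what you would need to supply to make (c) go through.
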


If $\psi$ is a holomorphic section of a line bundle $L$ over $X$, then $\psi\in (f)$ if there is a $g\in\O(X, E^*\otimes L)$ such that
\begin{equation}\label{delta}
\delta_f g=\psi,
\end{equation}
where $\delta_f$ is contraction (interior multiplication) with $f$. 
If  $\varepsilon_1, \ldots, \varepsilon_n$ is a local holomorphic frame for $E$ and $\varepsilon^*_1, \ldots, \varepsilon^*_n$ is the dual frame, so that $f=\sum_{i=1}^m f_i \varepsilon_i$ and $g=\sum g_i \varepsilon^*_i$, then \eqref{delta} just reads $\sum f_i g_i  =\psi$, that is, \eqref{mja}. 
Andersson's construction of $R^f$ is based on the Koszul complex, which, combined with solving $\dbar$-equations, is a classical tool for solving division problems, see for example ~\cite{Hormander}. Vaguely speaking, $R^f$ appears as an obstruction when one tries to extend a solution $g$ to the division problem \eqref{delta} from $X\setminus V_f$ to $X$. 
Let $s$ be the section of $E^*$ with pointwise minimal norm, such that $\delta_f s=|f|^2$, where $|\cdot |$ is the Hermitian metric on $X$, and let 
\begin{equation}\label{udef}
u=\sum_k \frac{s\wedge (\dbar s)^{k-1}}{|f|^{2k}}.
\end{equation}
Then $u$ is a section of $\Lambda (E^*\oplus T^*_{0,1}(X))$, which is clearly well-defined and smooth outside $V_f$ and moreover 
$\dbar |f|^{2\lambda} \wedge u$ 
has an analytic continuation as a current to where $\Re\lambda > -\varepsilon$. The current $R^f$ is defined as the value at $\lambda=0$. Locally the coefficients of $R^f$ are the residue currents introduced by Passare-Tsikh-Yger ~\cite{PTY}.

Morally, the residue current $R^f$ is an obstruction to solve \eqref{delta} locally on $X$. To glue these local solutions together to a global solution we need to solve certain $\dbar$-equations on $X$. 
The following result is a special case of Theorem 2.3 in ~\cite{A3}. 
\begin{thm}\label{abas}
Let $L$ be a line bundle over $X$. Assume that 
\begin{equation}\label{hos}
H^{0,q}(X, \Lambda ^{q+1} E^* \otimes L)=0
\end{equation}
for $1\leq q \leq \min (m-1, n)$. Let $\psi$ be a holomorphic section of $L$. 
If $\psi R^f=0$, then there is a $g\in\O(X, E^*\otimes L)$ that satisfies \eqref{delta}. 
\end{thm}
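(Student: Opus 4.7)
The plan is to exploit the explicit current $u$ from \eqref{udef} together with the fundamental identity $(\delta_f - \bar\partial)u = 1 - R^f$, which falls out of Andersson's construction via analytic continuation in $\lambda$. Decomposing $u = u_1 + u_2 + \cdots$, where $u_k$ is a $(0,k-1)$-current with values in $\Lambda^k E^*$, and setting $v_k := \psi u_k$, the hypothesis $\psi R^f = 0$ gives $(\delta_f - \bar\partial)(\psi u) = \psi(1 - R^f) = \psi$. Splitting by bidegree yields
\begin{equation*}
\delta_f v_1 = \psi, \qquad \bar\partial v_k = \delta_f v_{k+1} \quad (k \geq 1).
\end{equation*}
Thus $v_1$ already solves \eqref{delta} in the sense of currents; what remains is to replace it by an honest holomorphic section.

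I would do this by a descending cascade of $\bar\partial$-equations. Let $N := \min(m, n+1)$. Since $v_k = 0$ for $k > N$ (by either the rank of $E^*$ or the complex dimension of $X$), $v_N$ is $\bar\partial$-closed. The vanishing hypothesis with $q = N-1$ produces a $(0, N-2)$-current $w_N$ in $\Lambda^N E^* \otimes L$ with $\bar\partial w_N = v_N$. Inductively I would define $v'_k := v_k + \delta_f w_{k+1}$ (with $w_{N+1} := 0$); using $\delta_f^2 = 0$ and the graded commutation $\bar\partial \delta_f + \delta_f \bar\partial = 0$ one checks that $\bar\partial v'_k = 0$. The vanishing hypothesis with $q = k-1$, which is in range for $k = 2, \ldots, N$ since $k - 1 \leq N - 1 \leq \min(m-1, n)$, then produces $w_k$ with $\bar\partial w_k = v'_k$.

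Setting $g := v_1 + \delta_f w_2$, I compute $\delta_f g = \delta_f v_1 + \delta_f^2 w_2 = \psi$ and
\begin{equation*}
\bar\partial g = \bar\partial v_1 + \bar\partial \delta_f w_2 = \delta_f v_2 - \delta_f \bar\partial w_2 = \delta_f v_2 - \delta_f v'_2 = 0,
\end{equation*}
so $g$ is a holomorphic section of $E^* \otimes L$ that satisfies \eqref{delta}.

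The main care is in the sign bookkeeping of the cascade --- in particular, choosing the correct sign in $v'_k := v_k + \delta_f w_{k+1}$ --- and in the one genuine subtlety: applying the Dolbeault vanishing, which is usually stated for smooth coefficients, to the current-valued data $v'_k$. The latter causes no trouble, however, since smooth forms and currents compute the same $\bar\partial$-cohomology on a complex manifold.
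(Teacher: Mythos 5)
Your argument is correct and is essentially the proof the paper relies on: the paper does not reprove this statement but cites Andersson's Theorem~2.3 in \cite{A3}, whose proof is exactly this homotopy identity $(\delta_f-\bar\partial)U=1-R^f$ (for the current extension $U=|f|^{2\lambda}u|_{\lambda=0}$, which is what your ``analytic continuation'' remark amounts to) followed by the descending cascade of $\bar\partial$-equations made solvable by \eqref{hos}. The only points to watch are the ones you already flag: the identity holds for the regularized current $U$ rather than the naive $u$, and the equivalence of smooth and current Dolbeault cohomology, both standard.
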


Given a holomorphic function $g$ we will use the notation $\dbar [1/g]$ for the value at $\lambda=0$ of $\dbar|g|^{2\lambda}/g$ and analogously by $[1/g]$ we will mean $|g|^{2\lambda}/g|_{\lambda=0}$. For further reference note that $g \dbar [1/g]=0$. 

The residue currents that appear in this paper allow for multiplication with characteristic functions of varieties, and more generally constructible sets, in such a way that ordinary calculus rules hold; in fact, they are \emph{pseudomeromorphic currents} in the sense of ~\cite{AW2}. In particular, if $R$ is a residue current on $X$ and $V\subset X$ is a variety, then $\psi R=0$ if and only if $\psi\1_V R =0$ and $\psi \1_{X\setminus V} R=0$. Also, if $\pi: X\to Y$ is a holomorphic modification and $W$ is a subvariety of $Y$, then 
\begin{equation}\label{uppner}
\1_W(\pi_* R)=\pi_*(\1_{\pi^{-1}(W)}R).
\end{equation}
Is $Z$ is an analytic variety with support $|Z|$, we will write $\1_Z$ for $\1_{|Z|}$. 

A pseudomeromorphic current with support on a variety $Z$ is said to have the \emph{Standard Extension Property (SEP)} (with respect to $Z$) in the sense of Bj\"ork ~\cite{Bj} if $\1_W T=0$ for all subvarieties $W \subset Z$ of positive codimension. The Coleff-Herrera product \eqref{coleff} has the SEP (with respect to $V_f$); in particular, $\dbar[1/g]$ has the SEP. 

One can define pseudomeromorphic currents also on singular varieties, so that the properties above hold true, see \cite{Larkang}.

\section{Toric varieties}\label{toric}
A toric variety is a partial compactification of the torus $T=(\C^*)^n$, which admits an action of $T$ that extends the action of $T$ on itself; for a general reference on toric varieties, see \cite{F}. A toric variety can be constructed from a \emph{fan} $\Delta$, which is a certain collection of lattice cones, by gluing together copies of $\C^n$ corresponding to $n$-dimensional cones of $\Delta$; we denote the resulting toric variety by $X_\Delta$.  Throughout this paper we will assume that the lattice is $\Z^n$. 
We will also assume that all fans $\Delta$ are \emph{complete}, that is, $\bigcup_{\sigma\in\Delta}\sigma=\R^n$; then the corresponding toric varieties are compact.

\subsection{Toric varieties from polytopes}\label{toricpoly}
Let $\poly$ be a lattice polytope in $\R^n$. Note that if $F$ is a polynomial, then $\supp F\subseteq\Q\subseteq\R_{\geq 0}^n$. Therefore we will assume that all lattice polytopes in this paper contained in $\R^n_{\geq 0}$. Let $\rho_1, \ldots, \rho_s$ be the normal vectors of the facets (faces of maximal dimension) of $\poly$, chosen in such a way that each $\rho_j$ is the shortest inwards pointing normal vector that has integer coefficients. Then $\poly$ admits a representation
\begin{equation}\label{rational}
\poly=\bigcap_j \{x\in\R^n \text{ such that } \langle x, \rho_j \rangle \geq -a_j\}
\end{equation} 
for some integers $a_j$. The polytope $\poly$ determines a complete fan $\Delta_\poly$ whose cones correspond to the faces of $\poly$; given a face $A$ of $\poly$, the corresponding cone $\sigma_A$ is generated by the $\rho_j$ for which $A$ is a face of the facet determined by $\rho_j$.

A toric variety $X_\Delta$ is smooth if and only if each cone in $\Delta$ is generated by a part of a basis for the lattice $\Z^n$, see ~\cite[p.~29]{F}. Such a fan is said to be \emph{regular}. A polytope $\poly$ is smooth precisely when $\Delta_\poly$ is regular, cf. the introduction. For each fan $\Delta$ there exists a refinement $\widetilde \Delta$ of $\Delta$ such that $X_{\widetilde \Delta} \to X_\Delta$ is a resolution of singularities, see ~\cite[p.~48]{F}. Also if $\Delta_1$ and $\Delta_2$ are two different fans, there exists a regular fan $\widetilde\Delta$ that refines both $\Delta_1$ and $\Delta_2$. If $\Delta$ is a refinement of $\Delta_\poly$ we say that $\Delta$ is \emph{compatible} with $\poly$, see ~\cite[p.~73]{F}.

\subsection{Divisors and line bundles}\label{dochl}
Each one-dimensional cone $\R_+\rho_j$ of a fan $\Delta$ determines a divisor $D_j$ on $X_\Delta$ that is invariant under the action of $T$. Moreover, any divisor on $X_\Delta$ is rationally equivalent to a $T$-invariant divisor, or $T$-divisor for short, so the $D_j$ generates the Chow group $A_{n-1}(X_\Delta)$ of Weil divisors modulo rational equivalence. 

A $T$-Cartier divisor on $X_\Delta$ is of the form 
$\sum_j \langle a, \rho_j \rangle D_j$, for some $a\in\Z^n$; we identify Cartier divisors with the corresponding Weil divisors. A $T$-Cartier divisor on $X_\Delta$ gives rise to a polytope $\poly_D$, compatible with $\Delta$. If  $D=\sum b_j D_j$, then $\poly_D=\bigcap_j \{x\in \R^n \text{ such that } \langle x, \rho_j \rangle \geq - b_j\}$. 
The global holomorphic sections of the line bundle $\O(D)$ correspond precisely to  polynomials with support in $\poly_D$.

A $T$-Cartier divisor $D$ also gives rise to a continuous piecewise linear function $\Psi_D$ on $\R^n$; if $D=\sum b_j D_j$, then $\Psi_D$ is defined by $\Psi_D(\rho_j)=-b_j$. In particular, $\Psi_D$ is linear on each cone of $\Delta$. The function $\Psi_D$ is said to be \emph{strictly concave} if it is concave and the linear functions defining it are different for different $n$-dimensional cones of $\Delta$. Concavity of $\Psi_D$ is related to positivity of the line bundle $\O(D)$: $\O(D)$ is generated by its sections if and only if $\Psi_D$ is concave and it is ample if and only if $\Psi_D$ is strictly concave. 
It follows that the line bundle $\O(D_\poly)$ is ample on $X_\poly$. Moreover, if $\Delta$ is compatible with $\poly$ of the form \eqref{rational}, then $\poly$ determines a $T$-Cartier divisor $D_\poly= \sum a_j D_j$ on $X_{\Delta}$ such that $\poly_{D_\poly}=\poly$ and the line bundle $\O(D_\poly)$ is generated by its sections.

\subsection{Line bundle cohomology}\label{cohomology}
If $\Delta$ is complete and $L$ is a line bundle over $X_\Delta$, which is generated by its sections, then $H^{0,q}(X_\Delta,L)=0$ for all $q\geq 1$. By Serre duality, 
$H^{0,q}(X, -L)= H^{0,n-q}(X, L+K_X)$,  
where $K_X$ denotes the \emph{canonical divisor} on $X$. The canonical divisor on $X_\Delta$ is given as $K_X=-\sum D_j$, where $D_j$ are the irreducible divisors corresponding to the one-dimensional cones of $\Delta$. We conclude the following.  
\begin{lma}\label{today}
If $\Delta$ is compatible with $\poly$, then $H^{0,q}(X_\Delta, \O(D_{c\poly}))=0$ for all $c\geq 0$, for which $c\Q$ is a lattice polytope, and $q\geq 1$.

If moreover $\O(D_\poly +K_{X_\Delta})$ is generated by its sections, then $H^{0,q}(X_\Delta, \O(-D_{c\poly}))=0$ for $1\leq q\leq n-1$ and any $c\geq 1$, for which $c\Q$ is a lattice polytope.
\end{lma}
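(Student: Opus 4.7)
The plan is to deduce both assertions from the toric vanishing recorded in the opening sentence of Section~\ref{cohomology}: a globally generated line bundle on a complete toric variety has vanishing $H^{0,q}$ for $q\geq 1$. The basic observation is that scaling $\poly$ by $c$ scales every defining constant $a_j$ in \eqref{rational} by $c$, so $D_{c\poly}=cD_\poly$ as Cartier divisors and the support function satisfies $\Psi_{D_{c\poly}}=c\Psi_{D_\poly}$.

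For the first statement, since $\Delta$ refines $\Delta_\poly$, the remark at the end of Section~\ref{dochl} gives that $\O(D_\poly)$ is generated by its sections on $X_\Delta$, equivalently that $\Psi_{D_\poly}$ is concave. For $c\geq 0$ with $c\poly$ a lattice polytope, $c\Psi_{D_\poly}$ is still concave, so $\O(D_{c\poly})$ is globally generated, and the cited vanishing yields $H^{0,q}(X_\Delta,\O(D_{c\poly}))=0$ for $q\geq 1$.

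For the second statement I would use Serre duality together with $K_{X_\Delta}=-\sum D_j$ to identify
\[
H^{0,q}(X_\Delta,\O(-D_{c\poly}))\;\cong\;H^{0,n-q}(X_\Delta,\O(D_{c\poly}+K_{X_\Delta})),
\]
and decompose $\O(D_{c\poly}+K_{X_\Delta})=\O(D_\poly+K_{X_\Delta})\otimes\O(D_{(c-1)\poly})$. Here $(c-1)\poly$ is still a lattice polytope, because each of its vertices has the form $cv-v$ with $cv,v\in\Z^n$, so the argument of the previous paragraph applied with $c-1\geq 0$ shows that $\O(D_{(c-1)\poly})$ is globally generated; combined with the added hypothesis on $\O(D_\poly+K_{X_\Delta})$, the tensor product is globally generated, and the Demazure-type vanishing gives $H^{0,n-q}=0$ whenever $n-q\geq 1$, i.e.\ in the range $1\leq q\leq n-1$ claimed. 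The whole argument is essentially bookkeeping once the two toric facts are in place; the only slightly delicate point is checking that scaling by $c-1$ preserves the lattice-polytope hypothesis so that the first part of the lemma may be reused in the second.
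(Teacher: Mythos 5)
Your argument is correct and is essentially the paper's own: the author dismisses the first claim as immediate from the facts collected in Sections 3.2 and 3.3, and for the second claim writes only that $\Psi_{D_{c\poly}+K_{X_\Delta}}$ is concave as soon as $\Psi_{D_\poly+K_{X_\Delta}}$ is — which, unpacked, is exactly your decomposition $\Psi_{D_{c\poly}+K_{X_\Delta}}=\Psi_{D_\poly+K_{X_\Delta}}+(c-1)\Psi_{D_\poly}$ together with Serre duality. Your bookkeeping point that $(c-1)\poly$ remains a lattice polytope (each vertex is $cv-v$ with $cv,v\in\Z^n$) is a welcome elaboration of a step the paper leaves implicit.
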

To see the second statement, note that for $c\geq 1$, $\Psi_{D_{c\poly}+K_{X_\Delta}}$ is concave as soon as $\Psi_{D_\poly+K_{X_\Delta}}$ is.

Let $\O(a)$ denote the line bundle over $\P^n$ whose sections correspond to $a$-homogeneous polynomials. Recall the following well known vanishing theorem, see for example ~\cite[Thm.~10.7, p.~437]{Dem}.
\begin{thm}\label{forsvinnande}
It holds that $H^{0,q}(\P^n, \O(a))=$ if (and only if)
$q=0$ and $a<0$, $1\leq q\leq n-1$, or $q=n$ and $a \geq -n$.
\end{thm}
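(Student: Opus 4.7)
The statement is the classical Bott--Dolbeault formula for line bundles on $\P^n$, so the plan is to give the standard three-case argument.

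First, for $q=0$ the group $H^{0,0}(\P^n,\O(a))$ is just the space of global holomorphic sections of $\O(a)$, which is identified with the space of homogeneous polynomials of degree $a$ in $n+1$ variables. This vanishes precisely when $a<0$, so the $q=0$ half of the equivalence is immediate.

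Next, for $q=n$ I would invoke Serre duality together with the identification $K_{\P^n}=\O(-n-1)$:
\[
H^{0,n}(\P^n,\O(a)) \;\cong\; H^0(\P^n,\O(-a-n-1))^*,
\]
which by the $q=0$ case vanishes if and only if $-a-n-1<0$, i.e.\ $a\geq -n$. This disposes of the top degree.

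The substantive part is the middle range $1\leq q\leq n-1$. My plan is to prove this by induction on $n$, with the base case $n=1$ being vacuous. For the inductive step, fix a hyperplane $H\cong\P^{n-1}\subset\P^n$ and use the short exact sequence
\[
0 \longrightarrow \O_{\P^n}(a-1) \longrightarrow \O_{\P^n}(a) \longrightarrow \O_H(a) \longrightarrow 0.
\]
The associated long exact cohomology sequence, combined with the inductive hypothesis $H^q(H,\O_H(a))=0$ for $1\leq q\leq n-2$, shows that the multiplication-by-(a linear form) maps
\[
H^q(\P^n,\O(a-1))\to H^q(\P^n,\O(a))
\]
are isomorphisms for $1\leq q\leq n-2$ and surjective for $q=n-1$, while the analogous maps in degree $q+1$ are injective. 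Iterating in $a$ and comparing with Serre vanishing ($H^q(\P^n,\O(a))=0$ for $q\geq 1$ and $a\gg 0$) on one side and Serre duality (bounding cohomology for $a\ll 0$) on the other, we propagate the vanishing to every $a\in\Z$ in the range $1\leq q\leq n-1$.

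The only delicate point is the bookkeeping at the endpoints $q=1$ and $q=n-1$, where the long exact sequence does not split into isomorphisms; here one has to carry the induction on $a$ in \emph{both} directions from a single value where Serre vanishing or Serre duality gives the vanishing for free. Alternatively one could sidestep the induction altogether by computing the \v{C}ech cohomology of $\O(a)$ on the standard affine cover $\{z_i\neq 0\}$ of $\P^n$, where a direct monomial analysis shows that only the extremes $q=0$ (monomials with all exponents $\geq 0$) and $q=n$ (monomials with all exponents $\leq -1$) can contribute; I would use this as a sanity check.
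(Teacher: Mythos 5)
The paper gives no proof of this statement; it is quoted as a classical fact with a reference to Demailly's monograph, so there is no ``paper's argument'' to compare with, and any supplied proof is a genuine addition. Your plan---$q=0$ from the explicit description of global sections, $q=n$ via Serre duality with $K_{\P^n}=\O(-n-1)$, and the middle range by induction on $n$ using the hyperplane sequence---is the standard textbook route and is sound. One concrete slip in the inductive step: from the inductive hypothesis $H^q(H,\O_H(a))=0$ for $1\leq q\leq n-2$ alone, the map $H^q(\P^n,\O(a-1))\to H^q(\P^n,\O(a))$ is \emph{surjective} at $q=1$ (because $H^1(H,\O_H(a))=0$) but not automatically injective (the connecting map from $H^0(H,\O_H(a))$ need not be zero), while at $q=n-1$ it is \emph{injective} (because $H^{n-2}(H,\O_H(a))=0$) but not automatically surjective; you have the behaviour at the two endpoints interchanged. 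To get the isomorphism at $q=1$ one uses the additional input that the restriction $H^0(\P^n,\O(a))\to H^0(H,\O_H(a))$ is onto, after which Serre vanishing at $a\gg 0$ propagates back through all $a$ in a single pass, and the $q=n-1$ case then follows by Serre duality on $\P^n$ itself. Your \v{C}ech-cohomology fallback proves all three regimes uniformly by inspecting Laurent monomial weights on the standard affine cover, avoids Serre duality and Serre vanishing entirely, and in my view deserves to be the main argument rather than a sanity check.
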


Given line bundles $L_1\to X_1$ and $L_2\to X_2$, let $L_1\boxtimes L_2\to X_1\times X_2$ denote the tensor product of the pullbacks of $L_1$ and $L_2$ to $X_1\times X_2$. 
By the \emph{K\"unneth Formula}, we have:
\begin{multline}\label{kunneth}
H^{0,q}\big (\P^{n_1}\times\cdots\times\P^{n_r}, \O(a_1)\boxtimes\cdots\boxtimes\O(a_r) \big )= \\
\bigoplus_{q_1+\cdots + q_r=q} H^{0,q_1}\big (\P^{n_1}, \O(a_1)\big )\otimes\cdots\otimes H^{0,q_r}\big (\P^{n_r}, \O(a_r)\big ).
\end{multline}

\begin{ex}\label{pluttex}
Assume that $\Q$ is a product of simplices, that is, $\Q$ is of the the form \eqref{product}. Set $n:=n_1+\cdots + n_r$. Then $\Q$ has normal directions 
$\rho_1=e_1, \ldots, \rho_n=e_n, \rho_{n+1},\ldots, \rho_{n+r}$, where $\rho_{n+1}$ has $-1$ in the first $n_1$ positions and zeros elsewhere, and for $2\leq k \leq r$, $\rho_{n+k}$ has $-1$ in position $n_1+\cdots + n_{k-1}+1, \ldots, n_1 + \ldots + n_k$ and zeros elsewhere. 
The fan $\Delta_\Q$ is regular so that $\Q$ is smooth. In fact, $X_\Q=\P^{n_1}\times\cdots\times\P^{n_r}$.

Now $D_\Q= \sum d_j D_{n+j}$ and the line bundle $\O(D_\Q)$ is just the line bundle $\O(d_{1})\boxtimes\cdots\boxtimes\O(d_{r})$  over $\P^{n_1}\times\cdots\times\P^{n_r}$. 
Note that $\Psi_{D_\Q}$ is concave, which means that $\O(D_\Q)$ is generated by its sections, precisely when 
$d_k\geq 0$ for $1\leq k\leq r$ and that $\Psi_{D_\Q}$ is strictly concave, which means that $\O(D_\Q)$ is ample, precisely when 
$d_k \geq 1$ for $1\leq k\leq r$.

We claim that $H^{0,q}(X_\Q, \O(D_{c\Q}))=0$ if $1\leq q \leq n-1$ and $c$ is any integer. More precisely, if $1\leq q\leq n-1$, then \eqref{kunneth} vanishes as soon as either all $a_i \geq 0$ or all $a_i < 0$. To see this note that if $q>0$ then each term in the left hand side of \eqref{kunneth} has at least one factor $H^{0,q_j}(\P^{n_j}, \O(a_j))$ for which $q_j>0$. Now if $a_i \geq - n_i$ or $a_i < 0$ for all $i$, this factor vanishes according to Theorem ~\ref{forsvinnande}. Similarly, if $q<n$, then each terms has a factor for which $q_j<n_j$ and so this factor vanishes if $a_j <0$, which proves the claim.
\end{ex} 

\subsection{Homogeneous coordinates on toric varieties}\label{homo}
The homogeneous coordinate ring $S$ on a toric variety $X_\Delta$ was introduced by Cox ~\cite{Cox} as a generalization of homogeneous coordinates on projective space. 
The ring $S$ has one variable $z_j$ for each one-dimensional cone $\R_+ \rho_j$ in the fan $\Delta$ or, equivalently, for each irreducible $T$-Weil divisor $D_j$ on $X_\Delta$. Moreover $S$ has a grading inherited from the Chow group $A_{n-1}(X_\Delta)$: the degree of a monomial $\prod z_j^{a_j}$ is $[\sum a_j D_j]\in A_{n-1}(X_\Delta)$. Let $D=\sum a_j D_j$ be a $T$-divisor on $X_\Delta$. The global sections of the line bundle $\O(D)$ can then be expressed as polynomials in the monomials $\mu_b=\prod_j z_j^{\langle b, \rho_j \rangle + a_j}$, where $b=(b_1, \ldots, b_n)\in \poly_D \cap \Z^n$. If $X_\Delta$ is smooth, then local coordinates in the affine chart $\U_\sigma$ corresponding to the $n$-dimensional cone $\sigma$ is obtained by setting $z_j=1$ if $\R_+ \rho_j$ is not a facet of $\sigma$, see, for example,  ~\cite{Weimann}.

In this paper we want to consider toric varieties that are compactifications of $\C^n$. 
Assume that $\poly$ is a lattice polytope that contains the origin and the lattice points $e_1, \ldots, e_n$, that is, the support of the coordinate functions $z_1,\ldots, z_n$. Given such a polytope $\poly$ one can always find a regular fan, compatible with $\poly$, that contains the $n$-dimensional cone $\sigma_0$ generated by $\rho_1=e_1, \ldots, \rho_n=e_n$; in fact, $\sigma_0$ is the first orthant in $\R^n$. Let $\Delta$ be such a fan. Then, in the representation \eqref{rational} of $\poly$,  $a_1=\ldots =a_n=0$. It follows that 
\begin{equation*}
\mu_b=z_1^{b_1}\cdots z_n^{b_n} z_{n+1}^{\langle b, \rho_{n+1}\rangle + a_{n+1}} \cdots z_{n+s}^{\langle b, \rho_{n+s}\rangle + a_{n+s}}.
\end{equation*}
Thus, in local coordinates in $\U_{\sigma_0}$,  $\mu_b=z_1^{b_1}\cdots z_n^{b_n}=z^b$, and so $\mu_b$ can really be thought of as a homogenization of the monomial $z^b$; we will refer to a global section of $\O(D_\poly)$ as the \emph{$\poly$-homogenization} of the corresponding polynomial in $\U_{\sigma_0}$. We will identify the chart $\U_{\sigma_0}$ with our original $\C^n$ and refer to $X_{\Delta}\setminus \U_{\sigma_0}=\bigcup_{j \geq n+1} D_j$ as \emph{the variety at infinity} and denote it by $V_\infty$. 

Let us remark that by working on toric varieties obtained from arbitrary polytopes we could probably obtain results for Laurent polynomial in $(\C^*)^n$, cf. ~\cite[Theorem~2]{Sombra}.

\section{The basic result}\label{arkesektion}
The following basic result is a consequence of Theorem \ref{abas}.

\begin{thm}\label{arkesatsen}
Let $F_1,\ldots, F_m$, and $\Psi$ be polynomials in $\C [z_1,\ldots, z_n]$, and let  $\Q_j\supseteq \supp F_j$ and $\Qq\supseteq \supp \Psi$ be lattice polytopes that  contain the origin in $\R^n$. Assume that $\Delta$ is a regular fan, compatible with $\Q_j$ and $\Qq$, that contains the first orthant in $\R^n$ as a cone, and that 
\begin{equation}\label{arke}
H^{0,q}\Big (X_\Delta, \O \big (D_\Qq -( D_{\Q_{j_1}}+ \cdots + D_{\Q_{j_{q+1}}})\big)\Big) =0 
\end{equation}
for $1\leq q\leq \min (m-1,n)$ and all $\J=\{j_1,\ldots,j_{q+1}\}\subseteq \{1,\ldots,m\}$. 
Assume moreover that 
\begin{equation}\label{residy}
\psi R^f=0,
\end{equation}
where $\psi$ is the $\Qq$-homogenization of $\Psi$ and $f$ is the section $(f_1,\ldots, f_m)$ of $\O(D_{\Q_1})\oplus\cdots\oplus\O(D_{\Q_m})$ over $X_\Delta$, where $f_j$ is the $\Q_j$-homogenizations of $F_j$. 

Then there are polynomials $G_1,\ldots, G_m$ such that 
\begin{equation}\label{division}
\sum_{j=1}^m F_j G_j = \Psi
\end{equation}
and
\begin{equation}\label{uppskattning}
\supp F_j G_j \subseteq  \Qq. 
\end{equation}
\end{thm}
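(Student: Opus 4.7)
The plan is to apply Theorem~\ref{abas} directly on $X_\Delta$. I would take $L := \O(D_\Qq)$ and $E := \O(D_{\Q_1}) \oplus \cdots \oplus \O(D_{\Q_m})$, equipped with the direct sum of any smooth Hermitian metrics on the factors. Then $f = (f_1,\ldots,f_m)$ is a global holomorphic section of $E$, $\psi$ is a global holomorphic section of $L$, and the assumption \eqref{residy} is precisely the residue annihilation required in Theorem~\ref{abas}, so one of the two hypotheses there is free.

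To verify the cohomological hypothesis \eqref{hos}, I would unpack the exterior power: since $E$ is a direct sum of line bundles,
\[
\Lambda^{q+1} E^* \otimes L \;=\; \bigoplus_{\substack{\J \subseteq \{1,\ldots,m\} \\ |\J| = q+1}} \O\Bigl( D_\Qq - \sum_{j \in \J} D_{\Q_j} \Bigr),
\]
so $H^{0,q}(X_\Delta, \Lambda^{q+1} E^* \otimes L)$ is the direct sum of the groups appearing in \eqref{arke}, which vanish in the required range $1 \leq q \leq \min(m-1,n)$ by assumption. Theorem~\ref{abas} then produces $g = (g_1, \ldots, g_m) \in H^0(X_\Delta, E^* \otimes L)$ with $\delta_f g = \psi$; in components this reads $\sum_{j=1}^m f_j g_j = \psi$ as sections of $\O(D_\Qq)$.

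To translate back to the polynomial world, I would restrict to the distinguished affine chart $\U_{\sigma_0}$, which is identified with $\C^n$ because $\sigma_0$ (the first orthant) is a cone of $\Delta$, as in Section~\ref{homo}; under this identification $f_j$ becomes $F_j$ and $\psi$ becomes $\Psi$. Since both $\Qq$ and each $\Q_j$ lie in $\R^n_{\geq 0}$ and contain the origin, the coefficients of $D_\Qq$ and $D_{\Q_j}$ along the divisors $D_1, \ldots, D_n$ corresponding to $e_1, \ldots, e_n$ all vanish; consequently the polytope $P_{D_\Qq - D_{\Q_j}}$ indexing the global sections of $\O(D_\Qq - D_{\Q_j})$ is itself contained in $\R^n_{\geq 0}$, so each $g_j$ restricts to a genuine polynomial $G_j$ on $\U_{\sigma_0}$, yielding \eqref{division}. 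The support bound \eqref{uppskattning} then comes for free, since each product $f_j g_j$ is a global section of $\O(D_\Qq)$ and global sections of $\O(D_\Qq)$ are in bijection with polynomials whose support lies in $\Qq$ (Section~\ref{dochl}). All the technical content is already packaged into Theorem~\ref{abas} and the toric dictionary of Section~\ref{toric}; I expect the real work in applying this basic result to Theorems~\ref{mac}--\ref{kollarsats} will be in verifying, in each concrete setting, the cohomological vanishing \eqref{arke} and the residue annihilation \eqref{residy}.
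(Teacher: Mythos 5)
Your proposal is correct and follows essentially the same route as the paper: set $E=\O(D_{\Q_1})\oplus\cdots\oplus\O(D_{\Q_m})$ and $L=\O(D_\Qq)$, identify $\Lambda^{q+1}E^*\otimes L$ with the direct sum of the line bundles in \eqref{arke} so that Theorem~\ref{abas} applies, and then dehomogenize the resulting $g$ on the chart $\U_{\sigma_0}$. Your extra remark that the polytopes of $D_\Qq-D_{\Q_j}$ lie in $\R^n_{\geq 0}$ (because the coefficients along the coordinate divisors vanish) just makes explicit the paper's one-line ``dehomogenizing gives polynomials'' step.
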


In general, \eqref{arke} is satisfied if $\O(D_\Qq)$ is positive enough. For example, if $D_\Q$ is ample, then there is an $r$ such that \eqref{arke} holds for $\Qq=s\Q$ if $s\geq r$. 

If $\Q_j=d_j \Sigma^n$, where $d_j=\deg F_j$, and $\Qq$ is of the form $c\Sigma^n$, we can choose $X$ as $\P^n$. Then 
$\O(D_\Qq-(D_{\Q_{j_1}}+\cdots + D_{\Q_{j_{q+1}}}))$ is the bundle $\O(c-d_{j_1}-\cdots - d_{j_{q+1}})$ over $\P^n$, 
and so by Theorem ~\ref{forsvinnande}, \eqref{arke} is satisfied if $m\leq n$ or $c\geq d_1+\cdots + d_{n+1}-n$ if the $d_j$ are ordered so that $d_1\geq \ldots \geq d_m$; this is Theorem 1.1 in ~\cite{A3}. 
In this paper we generalize this basic situation in two directions: we consider the case when $\Q_j$ of the form $d_j \Q$, where $\Q$ is a fixed polytope (with certain properties), and the case when $\Q_j$ is a product of simplices.

Let $\a$ denote the ideal sheaf over $X$ generated by the tuple $f_1,\ldots, f_m$, let $\pi:X^+ \to X$ be the normalization of the blow-up of $\a$, and let $[D]=\sum r_i [D_i]$ be the associated divisor in $X^+$. Then $\psi$ is in the integral closure $\overline\a$ of $\a$ if $\pi^* \psi$ vanishes at least to order $r_j$ on each divisor $D_j$, see for example ~\cite{Laz}. In particular, if we let $r:= \max_j r_j$, then \eqref{storlek} is satisfied if $\psi$ vanishes to order $\min(m,n) r$ along $V_f$. Recall from Section ~\ref{rescurr} that \eqref{residy} is satisfied if and only if $\psi\1_{\C^n}R^f=0$ and $\psi\1_{V_\infty}R^f=0$.

\begin{lma}\label{oandligheten}
Assume that $\psi$ vanishes to order $\min(m,n) r$ along $V_\infty$. Then $\psi \1_{V_\infty}R^f=0$. 
\end{lma}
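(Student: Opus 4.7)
The plan is to prove the stronger statement that $\psi R^f = 0$ on some open neighborhood $U$ of $V_\infty$. Since $\1_{V_\infty} R^f$ is a pseudomeromorphic current supported on $V_\infty \subseteq U$, this yields $\psi \1_{V_\infty} R^f = \1_{V_\infty}(\psi R^f) = 0$, as desired.

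To verify $\psi R^f = 0$ in a neighborhood of $V_\infty$, I would apply part (c) of the theorem in Section~\ref{rescurr} locally; this asks that $|\psi| \le C|f|^{\min(m,n)}$ there, equivalently that $\psi \in \overline{\a^{\min(m,n)}}$ locally near $V_\infty$. By the standard characterization of integral closure via the normalized blow-up $\pi\colon X^+\to X_\Delta$ of $\a$, with $\pi^*\a = \O(-\sum_i r_i D_i)$, this amounts to $\pi^*\psi$ vanishing to order at least $\min(m,n)\,r_i$ on each exceptional divisor $D_i$ meeting $\pi^{-1}(V_\infty)$. For any such $D_i$ with $\pi(D_i)\subseteq V_\infty$, the assumption that $\psi$ vanishes to order $\min(m,n)\,r$ along every component of $V_\infty$, together with the definition $r=\max_i r_i$, directly gives the required bound on $D_i$.

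For an exceptional divisor $D_i$ meeting $\pi^{-1}(V_\infty)$ but not contained in it, the order-of-vanishing argument alone is insufficient, and here I would invoke the pseudomeromorphic machinery via the push-forward identity~\eqref{uppner}. Choosing $\pi$ to be a log resolution on which $\pi^*f$ factors as a monomial times a non-vanishing tuple, the Bochner--Martinelli construction yields a residue current $\tilde R$ on $X^+$ with $R^f = \pi_*\tilde R$, and hence $\1_{V_\infty}R^f = \pi_*\big(\1_{\pi^{-1}(V_\infty)}\tilde R\big)$. The current $\tilde R$ is supported on $\bigcup_i D_i$ and, since locally it is a residue current along these normal-crossing divisors, has the SEP along each $D_i$. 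Consequently, the contribution from any $D_i\not\subseteq \pi^{-1}(V_\infty)$ is supported on the proper subvariety $D_i\cap\pi^{-1}(V_\infty)$ and so vanishes by SEP; what remains comes from divisors $D_i\subseteq\pi^{-1}(V_\infty)$, where $\pi^*\psi$ vanishes to order $\min(m,n)\,r\ge \min(m,n)\,r_i$, and the local monomial version of part (c) annihilates the residue.

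The main obstacle is the rigorous identification of $R^f$ as the push-forward of $\tilde R$ on a suitable resolution, combined with the SEP-based cancellation that discards the exceptional divisors only partially contained in $\pi^{-1}(V_\infty)$. Once these ingredients are in place, the uniform factor $r=\max_i r_i$ in the vanishing-order hypothesis on $\psi$ suffices to conclude.
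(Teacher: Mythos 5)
Your overall strategy in the second half of the proposal is the paper's: realize $R^f$ as a push-forward of a current $\tilde R$ from a modification on which $\pi^*f=f_0f'$ with $f'$ nonvanishing, use \eqref{uppner} together with the SEP to reduce $\1_{V_\infty}R^f$ to the contributions of the divisors contained in $\pi^{-1}(V_\infty)$, and kill those by the vanishing of $\pi^*\psi$. (Your first idea, proving $\psi R^f=0$ on a whole neighborhood of $V_\infty$, indeed cannot work, since components of $V_f$ not contained in $V_\infty$ may meet $V_\infty$ and $\psi$ is not assumed to vanish along them; you rightly abandon it.) The genuine gap is in the last step: you replace the normalized blow-up of $\a$ by a log resolution while keeping $r=\max_i r_i$ defined via the Rees divisors on the normalized blow-up, and then assert that on every divisor $D_i\subseteq\pi^{-1}(V_\infty)$ of the log resolution one has $r\geq r_i$, so that vanishing of $\psi$ to order $\min(m,n)r$ along $V_\infty$ suffices. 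On a log resolution this comparison is not available: for an exceptional divisor $E$ created by further blow-ups, $\text{ord}_E(\pi^*\a)$ is a weighted sum of the Rees multiplicities and can strictly exceed $r$; worse, what your argument actually needs is $\text{ord}_E(\pi^*\a)\leq r\cdot\text{ord}_E(\pi^*V_\infty)$ for $E\subseteq\pi^{-1}(V_\infty)$, and this can fail. Concretely, in local coordinates near a point of $V_f\cap V_\infty$ where $V_\infty=\{w=0\}$ and $f_0=z$ (so the relevant Rees multiplicity is $1$), blow up the origin and then the intersection of the exceptional divisor with the strict transform of $\{z=0\}$: in the chart $z=u^2v$, $w=u$ the new divisor $E=\{u=0\}$ lies in $\pi^{-1}(V_\infty)$ but $\text{ord}_E(\pi^*z)=2$ while $\text{ord}_E(\pi^*w)=1$, so a $\psi$ vanishing to order exactly $\min(m,n)r$ along $V_\infty$ does not annihilate the corresponding terms $\dbar[1/f_0^k]\wedge\alpha_k$ along $E$. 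The conclusion of the lemma still holds in such situations, but only through cancellations that a divisor-by-divisor annihilation argument on the log resolution does not capture.

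The paper avoids exactly this by staying on the normalized blow-up $X^+$ itself, where $r\geq r_j$ holds by definition for the divisors of $\pi^*\a$: instead of resolving the singularities of $X^+$ and of $D$, it discards them, using the SEP of $R^+$, by removing the codimension-two set $Z$, and at the remaining generic points of a divisor $D_j$ mapped into $V_\infty$ one has $f_0=\sigma^{r_j}$ for a single defining function $\sigma$, while $\pi^*\psi$ is divisible by $\sigma^{\min(m,n)r}$; there the comparison with $r$ is legitimate. To argue on a smooth model as you propose, you would either have to redefine $r$ as the maximal multiplicity of $\pi^*\a$ on that model (which would destroy the estimate $r\leq\vol(\Q)$ of Remark \ref{rremark} that the applications rely on), or prove the inequality $\text{ord}_E(\pi^*\a)\leq r\,\text{ord}_E(\pi^*V_\infty)$ for the divisors you keep, which, as the example shows, is false in general.
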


\begin{proof}
One can show that $\pi^*(\dbar |f|^{2\lambda}\wedge u)$, where $u$ is defined by \eqref{udef}, has an analytic continuation as a current on $X^+$ to where $\Re\lambda >-\epsilon$, such that $\pi_*R^+=R^f$, where $R^+=\pi^*(\dbar |f|^{2\lambda}\wedge u)|_{\lambda=0}$, see \cite{Larkang}.

In $X^+$, $\pi^* f=f_0 f'$, where $f_0$ is holomorphic and $f'$ is a nonvanishing tuple. It follows that $R^+$ is of the form $\sum_{k=1}^{\min(m,n)}\dbar[1/f_0^k]\wedge \alpha_k$, where $\alpha_k$ are smooth, cf. \cite[Pf of Thm 1.1]{A}. Since $\dbar[1/f_0^k]$ has the SEP with respect to (the support of) $D$, so has $R^+$. It follows that  
$R^+=\sum_{D_j\subseteq D} \1_{D_j} R^+$ and moreover, using \eqref{uppner}, 
$\1_{V_\infty}R^f=\sum_{\pi(D_j)\subseteq V_\infty} \pi_*(\1_{D_j} R^+)$. 
Let $Z$ denote the union of the singular locus of $X^+$ and the singular locus of $D$. Then $Z$ has codimension 2 in $X^+$ and so $R^+=\1_{X^+\setminus Z}R^+$.

Assume that $\psi$ vanishes to order $\min(m,n) r$ along $V_\infty$. We need to show that $(\pi^*\psi) \1_{D_j\setminus Z} R^+=0$ if $D_j$ is one of the divisors that are mapped into $V_\infty$. Let $D_j$ be such a divisor. Then locally on $D_j\setminus Z$, $f_0=\sigma^{r_j}$, where $\sigma$ is a local defining function for $D_j$. Moroever $\pi^*\psi$ is divisible by $\sigma^{\min(m,n) r}$ and consequently it annihilates $\1_{D_j\setminus Z}R^+ = \sum_k \dbar[1/\sigma^{k r_j}] \wedge \alpha_k$.  Hence $\psi \1_{V_\infty}R^f=0$.
\end{proof}

\begin{remark}\label{rremark}
In some cases we can estimate $r$. Let us follow ~\cite[Chapter~10.5]{Laz}. Suppose that $D$ is a divisor on $X$ such that $\O_X(D)\otimes\a$ is globally generated. Then Proposition 10.5.5 in ~\cite{Laz} asserts that 
\begin{equation*} 
\sum_j r_j \cdot \deg_D(Z_j) \leq \deg_D(X)
\end{equation*}
where $Z_j=\pi(D_j)$ are the so-called \emph{distinguished varieties} associated with $\a$. If moreover $D$ is ample, then $\deg_D(Z_i)>0$ and so we get the following rough estimate of $r$:
\begin{equation*}
r\leq \deg_D(X)
\end{equation*}

Let $\Q$ be a smooth polytope that contains the supports of the $F_j$. Then $\O_{X_\Q}(D_\Q)\otimes \a$ is globally generated and $D_\Q$ is an ample divisor on $X_\Q$. Moreover $\deg_{D_\Q}(X_\Q) = \vol (\Q)$, see ~\cite[Prop.~2.10]{Oda}. Thus $r\leq \vol (\Q)$.

\end{remark}

\begin{proof}[Proof of Theorem \ref{arkesatsen}]
Let $E$ be the bundle $\O(D_{\Q_1})\oplus \cdots \oplus \O(D_{\Q_m})$ over $X_\Delta$ and let $L=\O(D_\Qq)$. 
Then 
\[
\Lambda^q E^* \otimes L = 
\bigoplus_{\J=\{j_1,\ldots, j_q\}\subseteq \{1,\ldots, m\}}\O\big (D_\Qq-(D_{\Q_{j_1}}+\cdots + D_{\Q_{j_q}})\big ),
\]
and so \eqref{hos} holds for $1\leq q \leq \min(m-1,n)$ if \eqref{arke} holds for $1\leq q\leq \min(m-1,n)$ and any multi-index $\J$ of length $q+1$. 
Thus, if $\psi\in\O(X_\Delta,L)$ annihilates the residue current $R^f$, then Theorem 
~\ref{abas} asserts that we can find a $g=(g_1,\ldots, g_m) \in\O(X_\Delta,E^*\otimes L)$ that satisfies \eqref{delta}. 
Dehomogenizing gives polynomials $G_1, \ldots, G_m$ in $\C [z_1,\ldots, z_n]$ that satisfy \eqref{division} and \eqref{uppskattning}. 
\end{proof}

\section{Results and proofs}\label{results}
In this section we deduce Theorems \ref{mac}-\ref{kollarsats} from Theorem \ref{arkesatsen}. We provide slightly more general formulations of some of the results and we also give sharper estimates in the special case when $\Q$ is a product of simplices, which corresponds to separate degree bounds in subsets of the variables. From now on let us use the shorthand notation $\mu:=\min(m,n)$. Also throughout the paper $F_1, \ldots, F_m$, and $\Phi$ are assumed to be polynomials in $\C [z_1,\ldots, z_n]$.

\subsection{Sparse versions of Macaulay's Theorem} 
\label{macresults}
In Theorem ~\ref{mac} the $F_j$ are assumed to have no common zeros neither in $\C^n$ nor at infinity. This should be interpreted as that $\Q$ necessarily contains the origin and the $\Q$-homogenizations $f_j$ of the $F_j$ lack common zeros in $X_\Delta$ if $\Delta$ is compatible with $\Q$. Observe that, whether the $f_j$ have common zeros in $X_\Delta$ in fact only depends on $\Q$ and not on the particular choice of $\Delta$.

Theorem ~\ref{mac} is a direct consequence of the following more general result, which was proved  for polynomials over arbitray fields, or even DVRs, by Castryck-Denef-Vercauteren, ~\cite{CDV}. We include a proof for completeness.  Theorem ~\ref{mac} corresponds to $d_j=1$ and $\Phi=1$. Tuitman, ~\cite{Tuitman}, proved a generalization of Castryck-Denef-Vercauteren's result, in which he allows the polynomials to have support in different polytopes, see also \cite{W}.  

\begin{thm}\label{macsamma}
Assume that $F_j$ has support in the lattice polytope $d_j \Q$, where $\Q$ is a fixed lattice polytope that contains the origin and the $d_j$ are ordered so that $d_1 \geq \cdots\geq d_m$. Assume that the $F_j$ have no common zeros neither in $\C^n$ nor at infinity, meaning that the $d_j \Q$-homogenizations of the $F_j$ lack common zeros. Assume that $\Phi$ has support in the lattice polytope $e\Q$. Then there are polynomials $G_j$ that satisfy 
\eqref{ideal} 
and 
\begin{equation}\label{finlasse}
\supp (F_j G_j)\subseteq \max (\sum_{j=1}^{n+1} d_j, e) \Q.
\end{equation}
\end{thm}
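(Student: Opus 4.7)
The plan is to derive Theorem \ref{macsamma} as a direct application of the basic result Theorem \ref{arkesatsen}, choosing $\Q_j := d_j \Q$ for $j = 1, \ldots, m$ and $\Qq := c\Q$ with $c := \max\bigl(d_1 + \cdots + d_{\min(m,n+1)},\, e\bigr)$ (padding the sum by zeros if $m < n+1$). First I would set up the ambient toric variety: since $\Q$ contains the origin, after the harmless enlargement that puts the lattice points $e_1, \ldots, e_n$ into $\Q$ (so that the standard affine chart $\U_{\sigma_0}$ is available, as in Section \ref{homo}), one can choose a regular refinement $\Delta$ of $\Delta_\Q$ that contains the first orthant as a cone. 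Such a $\Delta$ is automatically compatible with every dilate $d_j \Q$ and with $c\Q$, and on $X_\Delta$ the Cartier divisors satisfy $D_{d_j \Q} = d_j D_\Q$ and $D_{c \Q} = c D_\Q$, so all line bundles in sight are powers of $\O(D_\Q)$.

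Second, I would verify the cohomology hypothesis \eqref{arke}. Given any subset $\J = \{j_1, \ldots, j_{q+1}\} \subseteq \{1, \ldots, m\}$ with $1 \leq q \leq \min(m-1, n)$, the relevant line bundle is
\[
\O\bigl(D_{c\Q} - D_{d_{j_1}\Q} - \cdots - D_{d_{j_{q+1}}\Q}\bigr) \;=\; \O\bigl(a\, D_\Q\bigr), \qquad a := c - d_{j_1} - \cdots - d_{j_{q+1}}.
\]
Because $q+1 \leq \min(m,n+1)$ and the $d_j$ are arranged in nonincreasing order, the sum $d_{j_1} + \cdots + d_{j_{q+1}}$ is dominated by $d_1 + \cdots + d_{\min(m,n+1)} \leq c$, so $a$ is a nonnegative integer; hence $\O(aD_\Q) = \O(D_{a\Q})$ with $a\Q$ a lattice polytope, and the first part of Lemma \ref{today} supplies the required vanishing.

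Third, I would dispatch the residue condition. The hypothesis that the $F_j$ have no common zeros in $\C^n$ nor at infinity means precisely that the tuple $f=(f_1,\dots,f_m)$ of $d_j\Q$-homogenizations has empty zero locus $V_f$ in $X_\Delta$; since $R^f$ is supported on $V_f$, it vanishes identically, and so $\psi R^f = 0$ automatically regardless of $\Phi$. Theorem \ref{arkesatsen} then produces polynomials $G_j$ with $\sum_j F_j G_j = \Phi$ and $\supp(F_j G_j) \subseteq c\Q$, which is exactly \eqref{finlasse}. I foresee no real obstacle: the entire content is that the no-common-zeros hypothesis collapses the residue current to zero, turning the division problem into a pure sheaf-cohomology bookkeeping exercise on $X_\Delta$, and the bound $c \geq d_1 + \cdots + d_{\min(m,n+1)}$ is dictated precisely by having to keep the Koszul-level bundles $\O(aD_\Q)$ globally generated up through subsets of size $\min(m,n+1)$.
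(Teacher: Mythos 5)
Your proposal is correct and follows essentially the same route as the paper: set $\Q_j = d_j\Q$, $\Qq = c\Q$ with $c = \max(d_1+\cdots+d_{n+1},e)$, verify the cohomology hypothesis of Theorem~\ref{arkesatsen} via Lemma~\ref{today} using the fact that $c - (d_{j_1}+\cdots+d_{j_{q+1}}) \geq 0$ for $q \leq n$, and observe that the residue condition is trivial since $R^f = 0$ when the $d_j\Q$-homogenizations have empty common zero set.

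One small caveat: your parenthetical suggestion of enlarging $\Q$ so that it contains $e_1,\ldots,e_n$ is unnecessary and, if taken literally, would be problematic — it would change both the hypothesis (whether the $d_j\Q$-homogenizations lack common zeros at infinity depends on $\Q$) and weaken the conclusion (which is stated in terms of the original $\Q$). The paper instead relies on the fact that, since $\Q \subseteq \R^n_{\geq 0}$ contains the origin, the cone of $\Delta_\Q$ at the vertex $0$ already contains the first orthant, so one can choose a regular refinement $\Delta$ of $\Delta_\Q$ that contains the first orthant as a cone without touching $\Q$. With that detail removed, your argument matches the paper's.
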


\begin{proof}
Let $\Q_j=d_j\Q$, let $\Delta$ be regular and compatible with $\Q$. Since $\Q\subseteq\R^n_+$ contains the origin, we can choose $\Delta$ so that it contains the first orthant. Moreover, let $\Qq= c\Q$, where $c=\max (d_1+\cdots + d_{n+1},e)$. Then 
\[
\O\big (D_\Qq-(D_{\Q{j_1}}+\ldots + D_{\Q_{j_{q+1}}})\big )= \O\big (D_{(c-(d_{j_1}+\ldots + d_{j_{q+1}}))\Q}\big ),
\]
where $c-(d_{j_1}+\ldots + d_{j_{q+1}})\geq 0$ if $q\leq n$. It follows by Lemma ~\ref{today} that \eqref{arke} is satisfied for $1\leq q \leq \min(m-1,n)$ and any multi-index $\J$ of length $q+1$.

Let $f_j$ be the $\Q_j$-homogenizations of the $F_j$, let $R^f$ be the corresponding residue current, and let $\psi$ be the $\Qq$-homogenization of $\Phi$. 
Since the $f_j$ lack common zeros, $R^f=0$ and thus \eqref{residy} is trivially satisfied. Hence Theorem ~\ref{arkesatsen} asserts that there are polynomials $G_j$ that satisfy \eqref{finlasse}.
\end{proof}

The following result appeared in ~\cite[Theorems~10.2 and 13.4]{Ahl}. The proof given there uses Koszul complex methods. 
For completeness we give a proof using Theorem ~\ref{arkesatsen}.
\begin{thm}\label{macproduct}
Assume that $F_j$ has support in 
\[
\Q_j=d_{j1}\Sigma^{n_1}\times\cdots\times d_{jr}\Sigma^{n_r},
\]
where $n_1+\cdots + n_r=n$, and moreover that the $F_j$ have no common zeros neither in $\C^n$ nor at infinity in $\P^{n_1}\times\cdots\times\P^{n_r}$. 
Let $k_1, \ldots, k_r$ be a permutation of $1, \ldots, r$ and let 
\begin{equation}\label{cerat}
c_{k_\ell} = \max_{\J \text{ such that } |\J|=n_{k_\ell}+\cdots + n_{k_r}+1} 
\sum_{i =1}^{n_{k_\ell}+\cdots + n_{k_{r}}+1} d_{j_i k_\ell}-n_{k_\ell}.
\end{equation}
Then there are polynomials $G_j$ that satisfy \eqref{ett}
and
\begin{equation*}
\supp (F_j G_j)\subseteq c_1 \Sigma^{n_1}\times\cdots\times c_r\Sigma^{n_r}.
\end{equation*}
\end{thm}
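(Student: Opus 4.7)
The plan is to apply Theorem~\ref{arkesatsen} with $\Qq := c_1\Sigma^{n_1} \times \cdots \times c_r \Sigma^{n_r}$ and $\Psi = 1$, taking the ambient toric variety to be $X_\Qq = \P^{n_1} \times \cdots \times \P^{n_r}$, which is the smooth compactification associated with $\Qq$ as described in Example~\ref{pluttex}. By hypothesis the $\Q_j$-homogenizations $f_j$ have no common zeros in $X_\Qq$, so the residue current $R^f$ vanishes identically and the annihilation condition \eqref{residy} holds trivially with $\psi = 1$.

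The main task is to verify the cohomology vanishing \eqref{arke}. Fix $1 \leq q \leq \min(m-1,n)$ and a $(q+1)$-subset $\J = \{j_1, \ldots, j_{q+1}\}$ of $\{1,\ldots,m\}$. Under the identification $X_\Qq = \P^{n_1} \times \cdots \times \P^{n_r}$, the line bundle $\O\bigl(D_\Qq - D_{\Q_{j_1}} - \cdots - D_{\Q_{j_{q+1}}}\bigr)$ is $\O(a_1)\boxtimes\cdots\boxtimes\O(a_r)$ with $a_k = c_k - \sum_{i=1}^{q+1} d_{j_i k}$, so by the K\"unneth formula \eqref{kunneth} its $(0,q)$-cohomology splits as a sum over partitions $q_1+\cdots+q_r = q$ of $\bigotimes_k H^{0,q_k}(\P^{n_k}, \O(a_k))$. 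By Theorem~\ref{forsvinnande} any factor with $0 < q_k < n_k$ already vanishes, so the only terms that could be nonzero correspond to subsets $I \subseteq \{1,\ldots,r\}$ with $q = \sum_{k \in I} n_k$ (corresponding to $q_k = n_k$ for $k \in I$ and $q_k = 0$ otherwise); for these it suffices to exhibit either a $k \notin I$ with $a_k < 0$ or a $k \in I$ with $a_k \geq -n_k$.

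The second alternative always holds, by a short pigeonhole argument on the permutation. Set $N_\ell := n_{k_\ell} + n_{k_{\ell+1}} + \cdots + n_{k_r}$ and let $\ell_1$ be the largest index with $N_{\ell_1} \geq q$, which exists since $N_1 = n \geq q$. If $I \subseteq \{k_{\ell_1+1}, \ldots, k_r\}$, then $q = \sum_{k \in I} n_k \leq N_{\ell_1 + 1} < q$, a contradiction; hence some $k_\ell \in I$ with $\ell \leq \ell_1$, and this $k_\ell$ satisfies $N_\ell \geq q$. Since $|\J| = q+1 \leq N_\ell + 1$, we may enlarge $\J$ to an $(N_\ell + 1)$-subset $\widetilde\J = \{j_1, \ldots, j_{N_\ell + 1}\}$, and by the definition \eqref{cerat} of $c_{k_\ell}$ together with non-negativity of the $d_{j k_\ell}$,
\[
c_{k_\ell} + n_{k_\ell} \;\geq\; \sum_{i=1}^{N_\ell + 1} d_{j_i k_\ell} \;\geq\; \sum_{i=1}^{q+1} d_{j_i k_\ell},
\]
which is precisely $a_{k_\ell} \geq -n_{k_\ell}$. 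By Theorem~\ref{forsvinnande} the factor $H^{0,n_{k_\ell}}(\P^{n_{k_\ell}}, \O(a_{k_\ell}))$ vanishes, killing the K\"unneth term.

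Having verified the hypotheses of Theorem~\ref{arkesatsen}, we conclude that there exist polynomials $G_j$ with $\sum_j F_j G_j = 1$ and $\supp(F_j G_j) \subseteq c_1\Sigma^{n_1}\times\cdots\times c_r\Sigma^{n_r}$. The heart of the proof is the combinatorial pigeonhole step forcing some $k_\ell \in I$ with $N_\ell \geq q$, which makes the tailored definition \eqref{cerat} of $c_{k_\ell}$ fit exactly the cohomology vanishing pattern of Theorem~\ref{forsvinnande}; everything else is a direct translation between the K\"unneth decomposition and the abstract framework of Theorem~\ref{arkesatsen}.
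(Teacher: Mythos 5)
Your proof is correct and follows essentially the same route as the paper: apply Theorem~\ref{arkesatsen} on $X=\P^{n_1}\times\cdots\times\P^{n_r}$ with $\Qq=c_1\Sigma^{n_1}\times\cdots\times c_r\Sigma^{n_r}$, use $R^f=0$ (no common zeros) to dispose of the residue condition, and verify \eqref{arke} via the K\"unneth formula \eqref{kunneth} and Theorem~\ref{forsvinnande}, with the tailored definition \eqref{cerat} of the $c_{k_\ell}$ supplying the needed bound $a_{k_\ell}\geq -n_{k_\ell}$. Your reduction to the extreme K\"unneth terms $q_k\in\{0,n_k\}$ followed by the explicit pigeonhole on the index $\ell$ is a cleaner reorganization of what the paper achieves by induction on the range of $q$ against the partial sums $n_{k_\ell}+\cdots+n_{k_r}$.
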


The condition \eqref{cerat} means that $c_{k_\ell}$ is equal to the sum of the $n_{k_\ell} +\cdots + n_{k_r}+1$ largest $d_{j k_\ell}$ minus $n_{k_\ell}$. In particular, if $\Q_j=\Q$ of the form \eqref{product}, then $c_{k_\ell}=(n_{k_\ell}+\cdots + n_{k_r} +1)d_{k_\ell} - n_{k_\ell}$.

Macaulay's Theorem ~\cite{Mac} corresponds to the case when $\Q_j= d_j \Sigma^n$, where $d_j=\deg F_j$ and the $d_j$ are ordered so that $d_1\geq\ldots\geq d_m$:

\noindent
\emph{
Assume that $F_j$ have no common zeros even at infinity (in $\P^n$). Then one can find $G_j$ that satisfy \eqref{ett} and $\deg (F_j G_j) \leq \sum_{j=1}^{n+1}d_j-n$.}

\begin{proof}
Let $X=X_{\Q_j}=\P^{n_1}\times \cdots \times \P^{n_r}$, cf. Example ~\ref{pluttex}, and let $\Qq=c_1 \Sigma^{n_1}\times\cdots\times c_r \Sigma^{n_r}$.  Note that $\Delta_{\Q_j}$ contains the first orthant. Moreover note that 
\begin{multline}\label{trott}
H^{0,q}\Big (X, \O\big (D_\Qq-(D_{\Q_{j_1}}+\cdots + D_{\Q_{j_{q+1}}})\big )\Big )= \\
H^{0,q}\Big (\O\big (c_1 - \sum_{i=1}^{q+1} d_{j_i 1}\big )\boxtimes\cdots\boxtimes\O\big (c_r - \sum_{i=1}^{q+1} d_{j_i r}\big )\Big ). 
\end{multline}
By the K\"unneth formula, the right hand side of \eqref{trott} is equal to 
\begin{equation}\label{wetter}
\bigoplus_{q_1+\cdots +q_r=q} 
H^{0,q_1}\Big (\P^{n_{1}}, \O\big (c_{1}-\sum_{i=1}^{q+1} d_{j_i,1}\big )\Big ) 
\otimes\cdots\otimes 
H^{0,q_r}\Big (\P^{n_{r}}, \O\big (c_{r}-\sum_{i=1}^{q+1} d_{j_i,r}\big )\Big ).
\end{equation}
If $n_{k_2}+\ldots + n_{k_r} +1 = n-n_{k_1}+1 \leq q \leq n$, then $q_{k_1}\geq 1$ in all terms in \eqref{wetter}. Thus by \eqref{cerat} and Theorem ~\ref{forsvinnande} the factor 
\begin{equation}\label{q1}
H^{0,q_{k_1}}\big (\P^{n_{k_1}}, \O (c_{k_1}-\sum d_{j_i k_1})\big),
\end{equation}
in each term vanishes since the sum contains $q+1\leq n+1$ terms. 

If $n-n_{k_1}-n_{k_2}+1 \leq q \leq n-n_{k_1}$, then, in each term in \eqref{wetter}, either $q_{k_1}\geq 1$ or $q_{k_2}\geq 1$. In the first case \eqref{q1} vanishes as above. In the second case 
$H^{0,q_{k_2}}\big (\P^{n_{k_2}}, \O(c_{k_2}-\sum d_{j_i k_2})\big )$ vanishes. 

Hence \eqref{trott} vanishes for $n-n_{k_1}-n_{k_2}+1\leq q \leq n$. It follows by induction, using \eqref{cerat}, that \eqref{trott} vanishes for $1\leq q\leq n$ and any multi-index $\J=\{j_1,\ldots , j_{q+1}\}$.

Let $R^f$ be the residue current associated with the $\Q_j$-homogenizations of the $F_j$ and let $\psi$ be the $\Qq$-homogenization of $1$. As in the proof of Theorem ~\ref{macsamma}, \eqref{residy} is trivially satisfied and so Theorem ~\ref{arkesatsen} gives the desired polynomials $G_j$. 
\end{proof}

\subsection{Sparse versions of N\"other's $AF+ BG$ Theorem}\label{maxresults}
The assumption in Theorem \ref{max} that the zero set of the $F_j$ has no component contained in the variety at infinity should be interpreted as that for some $d_j$, such that $d_j\Q$ are lattice polytopes, the zero set $V_f$ of the $d_j \Q$-homogenizations of $F_j$ has no irreducible component contained in $V_\infty$ in $X_\Q$. Note that whether $V_f$ has a component contained in $V_\infty$ in $X_\Delta$, where $\Delta$ is compatible with $\Q$, actually does depend on $\Delta$. Indeed, in general $V_f$ blows up as $\Delta$ is refined. 

Moreover by $\Q$ being large we mean that $\O(D_\Q+K_{X_\Q})$ is generated by its sections. Roughly speaking this is satisfied if the faces of $\Q$ are large enough. In particular, given a smooth polytope $\Q$ then for some large enough integer $b$ the polytope $b\Q$ is large. In fact, Fujita's conjecture, which holds for toric varieties,  asserts that $b\Q$ is large if $b\geq n+1$, see ~\cite{Payne}. The assumption that $\Q$ is large and smooth is used in the proof of Theorem \ref{max}; we do not know whether they are necessary for the validity of the theorem. 

Let us give a more precise formulation of Theorem ~\ref{max}. Let $\N$ denote the natural numbers $1,2,\ldots$. 
\begin{thm}\label{maxforfinad}
Let $\Q$ be a smooth polytope, that contains the origin and the support of the coordinate functions $z_1,\ldots, z_n$ and that satisfies that the line bundle $\O(D_\Q+K_{X_\Q})$ over $X_\Q$ is generated by its sections. Assume that for some $d_j\in\N$, the zero set $V_f$ of the $d_j \Q$-homogenizations of the $F_j$ has codimension $m$ and moreover $V_f$ has no irreducible component contained in $V_\infty$ in $X_\Q$.

Assume that $\Phi\in (F_1,\ldots, F_m)$ and that $\supp \Phi \subseteq e \Q$, where $e\in\N$. Then there are polynomials $G_j$ that satisfy \eqref{ideal} and 
\begin{equation*}
\supp (F_j G_j) \subseteq e\Q.
\end{equation*}
\end{thm}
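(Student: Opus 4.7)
The plan is to derive the result from Theorem \ref{arkesatsen} applied with $\Q_j := d_j \Q$, $\Qq := e\Q$, and the fan $\Delta := \Delta_\Q$. Smoothness of $\Q$ makes $\Delta_\Q$ regular; the hypothesis that $\Q$ contains $0$ and $e_1, \ldots, e_n$ together with the discussion in Section \ref{homo} guarantees that $\Delta_\Q$ contains the first orthant as a cone, and every positive integer dilate of $\Q$ is automatically compatible with $\Delta_\Q$. With these choices the output bound from Theorem \ref{arkesatsen} is exactly $\supp(F_jG_j)\subseteq e\Q$, so the task reduces to verifying its two hypotheses: the cohomology vanishing \eqref{arke} and the residue annihilation $\psi R^f=0$.

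For the cohomology, I observe that $D_{e\Q}-D_{d_{j_1}\Q}-\cdots -D_{d_{j_{q+1}}\Q} = c\,D_\Q$ where $c:=e-d_{j_1}-\cdots-d_{j_{q+1}}\in\Z$. If $c\geq 0$, then $c\Q$ is a lattice polytope and the first part of Lemma \ref{today} gives $H^{0,q}(X_\Q,\O(D_{c\Q}))=0$ for every $q\geq 1$. If $c<0$, I rewrite the bundle as $\O(-D_{(-c)\Q})$ with $-c\geq 1$; here the hypothesis that $\Q$ is large, i.e., $\O(D_\Q+K_{X_\Q})$ is globally generated, is precisely what is needed to apply the second part of Lemma \ref{today}, yielding vanishing in the range $1\leq q\leq n-1$. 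Since $\codim V_f=m$ sits inside the $n$-dimensional variety $X_\Q$ we must have $m\leq n$, so $\min(m-1,n)=m-1\leq n-1$, and the required range $1\leq q\leq\min(m-1,n)$ of \eqref{arke} is covered in both cases.

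For the residue annihilation, the assumption $\codim V_f=m$ lets me invoke part (b) of Andersson's theorem: $R^f$ is locally a Coleff--Herrera product, and in particular has the SEP with respect to $V_f$. I split
\[
\psi R^f \;=\; \psi\,\1_{\C^n}R^f \;+\; \psi\,\1_{V_\infty}R^f.
\]
Since $V_f$ has no irreducible component contained in $V_\infty$, the set $V_f\cap V_\infty$ is a proper analytic subset of $V_f$, and the SEP immediately gives $\1_{V_\infty}R^f=\1_{V_f\cap V_\infty}R^f=0$, killing the second piece. For the first piece, in the affine chart $\U_{\sigma_0}=\C^n$ the $\Q$-homogenizations reduce to the original polynomials (Section \ref{homo}), so the global membership $\Phi\in(F_1,\ldots,F_m)$ in $\C[z_1,\ldots,z_n]$ translates locally on $\C^n$ to $\psi\in(f)$, and part (b) of Andersson's theorem again yields $\psi\,\1_{\C^n}R^f=0$.

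Combining the two inputs, Theorem \ref{arkesatsen} produces the desired $G_j$. The step I expect to be the most delicate is the treatment of the residue at infinity: smoothness of $\Q$ (and hence of $X_\Q$) is what makes the SEP argument above clean, because the Coleff--Herrera description of $R^f$ is valid on the smooth ambient space; if $\Q$ were only simplicial one would need to pass to a smooth refinement and track $R^f$ under pullback in the spirit of Lemma \ref{oandligheten}, which is the reason smoothness enters the hypothesis at all.
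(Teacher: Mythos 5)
Your proof is correct and follows essentially the same route as the paper: choose $\Q_j=d_j\Q$, $\Qq=e\Q$, $\Delta=\Delta_\Q$, establish the vanishing \eqref{arke} via Lemma~\ref{today} (using $m\leq n$ so that the range $1\leq q\leq\min(m-1,n)$ lies in $[1,n-1]$), and then verify $\psi R^f=0$ by invoking part~(b) of Andersson's theorem together with the SEP to handle the pieces on $\C^n$ and on $V_\infty$ respectively. Your explicit case split $c\geq 0$ versus $c<0$ in the cohomology step is a slightly more careful rendering of what the paper compresses into one appeal to Lemma~\ref{today}, but the content is identical.
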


\begin{proof}
Let $\Q_j=d_j \Q$ and $\Delta=\Delta_\Q$. Then $\Delta$ contains the first orthant and $X_\Delta=X_\Q$ is smooth. Note that the fact that the codimension of the zero set of the $F_j$ is $m$ implies that $m\leq n$. By the second part of Lemma ~\ref{today}, for $1\leq q \leq \min(m-1,n)\leq n-1$, \eqref{arke} is thus satisfied for any polytope $\Qq$ of the form $\Qq=c\Q$, where $c\in\Z$, in particular for $\Qq=e\Q$. 

Let $R^f$ be the residue current associated with the $\Q_j$-homogenizations of the $F_j$ and let $\psi$ be the $\Qq$-homogenization of $\Phi$. Since $\codim V_f =m$ Theorem ~\ref{residysats} implies that $R^f$ is locally a Coleff-Herrera product. It follows that $\psi \1_{\C^n} R^f=0$ since $\Phi\in (F)$. Moreover $\1_{V_\infty}R^f=0$,  since $V_f$ has no component contained in $V_\infty$ and $R^f$ has the SEP, see Section \ref{rescurr}. 
Hence \eqref{residy} is satisfied and so Theorem ~\ref{arkesatsen} gives the result. 
\end{proof}

\begin{remark}
In light of (the last part of) Example \ref{pluttex}, Theorem \ref{maxforfinad} holds true also if $\Q$ is a product of simplices, that is, if $\Q$ of the form \eqref{product}, even if $\O(D_\Q + K_{X_\Q})$ is not generated by its sections. 
\end{remark}

\subsection{Sparse versions Andersson-G\"otmark's and Hickel's Theorems and the Nullstellensatz}\label{inftyresults}
In general, to satisfy \eqref{residy}, $\psi$ has to annihilate $R^f$ both in $\C^n$ and at infinity. In the above situations the latter condition was trivially satisfied.

The assumption that $\Q$ is smooth is used in the proofs of Theorems ~\ref{agsats}-\ref{kollarsats}; we do not know if it is necessary for the validity of the results.

\begin{remark}\label{kvall}
Let $\Q$ be a lattice polytope and $\Delta$ a regular fan compatible with $\Q$. Assume that $\Delta$ contains the first orthant, generated by $\rho_1=e_1, \ldots, \rho_n=e_n$, and that $D_\Q=\sum_{j=n+1}^{n+r} a_j D_j$ on $X_\Delta$. Recall from Section ~\ref{homo} that the $\Q$-homogenization $\tilde 1$ of $1$ is given by $\tilde 1 = \prod_{j=n+1}^{n+r} z_j^{a_j}$. Note that $\tilde 1$ vanishes to order $a_\infty := \min_{j\geq n+1} a_j$ along $V_\infty=\bigcup_{j=n+1}^{n+r}D_j$. If $\Q$ is of the form \eqref{product} then $a_\infty= \min_j d_j$. Note that $a_\infty$ is bounded from below by the minimal side length of ~$\Q$.
\end{remark}

\begin{proof}[Proof of Theorem ~\ref{agsats}]
Let $\Delta=\Delta_{\Q}$. Then $\Delta$ contains the first orthant and $X_\Delta=X_\Q$ is smooth. Moreoever,  let $\Q_j=\Q$ and $\Qq=c\Q$, where $c=\lceil e + m \vol(\Q)/a \rceil$. 
Then 
\[
\O\big (D_\Qq-(D_{\Q{j_1}}+\ldots + D_{\Q_{j_{q+1}}})\big )= \O\big (D_{(c-(q+1))\Q}\big ).
\]
Note that $\lceil e + m \vol(\Q)/a \rceil \geq m$; indeed, $\vol (\Q)/a \geq 1$. 
It follows from Lemma ~\ref{today} that \eqref{arke} is satisfied for $1\leq q\leq \min (m-1,n)$ and any ~$\J$ of length $q+1$.

Let $R^f$ be the residue current associated with the $\Q$-homogenizations of the $F_j$ and let $\psi$ be the $\Qq$-homogenization of $\Phi$. By Theorem ~\ref{residysats}(b) the assumption that $\Phi\in (F_1,\ldots, F_m)$ implies that $\psi$ annihilates $R^f$ in $\C^n$, that is, $\psi\1_{\C^n} R^f=0$.  Moreover, according to Remark ~\ref{kvall}, $\psi$ vanishes to order $\geq m \vol (\Q)$ along $V_\infty$, which by Lemma ~\ref{oandligheten} and Remark ~\ref{rremark} means that $\psi\1_{V_\infty}R^f=0$. Thus $\psi$ satisfies \eqref{residy} and now the result follows from Theorem ~\ref{arkesatsen}.
\end{proof}

\begin{proof}[Proof of Theorem ~\ref{hickelsats}]
Let $\Q_j=\Q$, let $\Delta=\Delta_\Q$, and let $\Qq=c\Q$, where $c=\max (\lceil \mu (e+ \vol (\Q) /a)\rceil, \min (m,n+1))$. 
Clearly $c\geq \min (m,n+1)$. It follows from Lemma ~\ref{today} that \eqref{arke} is satisfied for the required $q$ and $\J$; cf. the proof of Theorem \ref{agsats}. 

Let $R^f$ be the residue current associated with the $\Q$-homogenizations of the $F_j$ and let $\psi$ be the $\Qq$-homogenization of $\Phi^\mu$. Then, by Theorem ~\ref{residysats}(c), $\psi \1_{\C^n}R^f=0$ since $\Phi\in\overline{(F)}$. Moreover, in light of Remark ~\ref{kvall}, $\psi$ vanishes at least to order $\mu \vol (\Q)$ along $V_\infty$, which by Lemma ~\ref{oandligheten} and Remark ~\ref{rremark} implies that $\psi \1_{V_\infty} R^f=0$. Thus $\psi$ satisfies \eqref{residy} and Theorem ~\ref{arkesatsen} gives the result.
\end{proof}

\begin{proof}[Proof of Theorem ~\ref{kollarsats}]
Let $\Q_j=\Q$, let $\Delta=\Delta_\Q$, and let $\Qq=c\Q$, where 
$c=\max (\lceil \mu \vol (\Q) (1/a+e)\rceil, \min (m,n+1))$. 
It follows from Lemma ~\ref{today} that \eqref{arke} is satisfied for the required $q$ and $\J$. 

Let $R^f$ be the residue current associated with the $\Q$-homogenizations of the $F_j$  and let $\psi$ be the $\Qq$-homogenization of $\Phi^{\mu\vol(\Q)}$. 
Then, in light of Remark ~\ref{kvall}, $\psi$ vanishes at least to order $\mu \vol (\Q)$ at the zero set of the $f_j$ including $V_\infty$, which by Theorem ~\ref{residysats}(c) and the discussion after Theorem ~\ref{arkesatsen} implies that $\psi R^f=0$. Thus $\psi$ satisfies \eqref{residy} and Theorem ~\ref{arkesatsen} gives the result.
\end{proof}

\begin{remark}\label{dagas}                                           
In light of the above proofs, note that, in the formulations of Theorems ~\ref{agsats}-\ref{kollarsats}, as well as Theorems ~\ref{agprod}-\ref{kollarprod} below, we could in fact replace $\vol (\Q)$ by the order $r$ of vanishing at infinity, as defined in Section ~\ref{arkesektion}. This would allow us to drop the assumption that $\Q$ is smooth. However, we only know how to estimate $r$ when $\Q$ is smooth, and then by the rather rough estimate $r\leq \vol (\Q)$. In many cases one can do much better.

Moreover we could replace the minimal side length $a$ of $\Q$ by $a_\infty$, as defined in Remark ~\ref{kvall}, and  the polytopes of the form $\lceil c \rceil \Q$ could be replaced by the smallest lattice polytopes that contains $c \Q$. 
\end{remark}

If $\Q$ is a product of lattice polytopes one can get somewhat sharper estimates. For $\Q$ of the form \eqref{product} we get the following versions of Andersson-G\"otmark's and Hickel's Theorems and the Nullstellensatz.

\begin{thm}\label{agprod}
Assume that $F_j$ has support in $\Q$ of the form \eqref{product}. Moreover, assume that the codimension of the common zero set of $F_1, \ldots, F_m$ in $\C^n$ is $m$, that $\Phi\in(F_1, \ldots, F_m)$, and that 
\begin{equation}\label{phistod}
\supp \Phi \subseteq e_1 \Sigma^{n_1} \times \cdots \times e_r \Sigma^{n_r}.
\end{equation}
Then there are polynomials $G_j$ that satisfy \eqref{ideal} and  
\begin{equation*}
\supp (F_j G_j) \subseteq 
\prod_{j=1}^r
\big (e_j + m \vol(\Q)\big ) \Sigma^{n_j}.
\end{equation*}
\end{thm}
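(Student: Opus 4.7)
The plan is to apply Theorem~\ref{arkesatsen} on $X = X_{\Delta_\Q} = \P^{n_1}\times\cdots\times\P^{n_r}$ (from Example~\ref{pluttex}), taking $\Q_j = \Q$ for every $j$ and
\[
\Qq = \prod_{j=1}^{r} (e_j + m\vol(\Q))\Sigma^{n_j}.
\]
The fan $\Delta_\Q$ is regular, contains the first orthant, and is compatible with both $\Q$ and $\Qq$, so what remains is to verify the cohomology vanishing \eqref{arke} and the residue annihilation \eqref{residy}. Note also that $\mu = \min(m,n) = m$, since the codimension hypothesis forces $m \leq n$.

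For \eqref{arke}, for any multi-index $\J$ of length $q+1$ with $1 \leq q \leq m-1$, the relevant line bundle is $\O(D_\Qq - (q+1)D_\Q)$, which on the product factors as $\boxtimes_j \O(c_j - (q+1)d_j)$ with $c_j = e_j + m\vol(\Q)$. Since $\vol(\Q) \geq d_j^{n_j} \geq d_j$ and $q+1 \leq m$, each exponent $c_j - (q+1)d_j$ is at least $e_j \geq 0$, and the K\"unneth argument in Example~\ref{pluttex} (valid for $1 \leq q \leq n-1$) delivers the required vanishing.

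For \eqref{residy}, I would split $R^f = \1_{\C^n}R^f + \1_{V_\infty}R^f$ and argue just as in the proof of Theorem~\ref{agsats}. In $\C^n$, the codimension hypothesis together with Theorem~\ref{residysats}(b) makes $R^f$ a Coleff-Herrera product locally, so $\Phi \in (F)$ gives $\psi\1_{\C^n}R^f = 0$, where $\psi$ is the $\Qq$-homogenization of $\Phi$. At infinity, for the irreducible component $D_{n+k}$ of $V_\infty$, one computes that the exponent of $z_{n+k}$ in the $\Qq$-homogenization of a monomial $z^\alpha$ with $\alpha \in \supp\Phi$ equals $\langle \alpha,\rho_{n+k}\rangle + c_k = c_k - \sum_{i\in I_k}\alpha_i$, where $I_k$ indexes the $k$-th block of coordinates. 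By \eqref{phistod} the sum is at most $e_k$, so this exponent is at least $c_k - e_k = m\vol(\Q)$. Since $r \leq \vol(\Q)$ by Remark~\ref{rremark}, $\psi$ vanishes to order at least $\mu r$ along each irreducible component of $V_\infty$, and Lemma~\ref{oandligheten} gives $\psi\1_{V_\infty}R^f = 0$. Theorem~\ref{arkesatsen} now produces the $G_j$ with $\sum F_jG_j = \Phi$ and $\supp(F_jG_j) \subseteq \Qq$.

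The main bookkeeping obstacle is the block-by-block vanishing at infinity: one must track the order of vanishing of $\psi$ along \emph{each} divisor $D_{n+k}$ separately, and observe that the exponent in the $k$-th block depends only on $e_k$ rather than on a single $e$ controlling all blocks. This is exactly what is bought by the product structure of $\Q$ and the refined support assumption \eqref{phistod}, and it is what upgrades the coarser bound of Theorem~\ref{agsats} to the sharper product bound stated here.
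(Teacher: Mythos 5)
Your proposal is correct and follows essentially the same route as the paper, which proves Theorem~\ref{agprod} by the scheme of Theorems~\ref{hickelprod} and \ref{agsats}: apply Theorem~\ref{arkesatsen} on $X_\Q=\P^{n_1}\times\cdots\times\P^{n_r}$ with $\Qq=\prod_j\big(e_j+m\vol(\Q)\big)\Sigma^{n_j}$, get \eqref{arke} from the K\"unneth/Example~\ref{pluttex} vanishing, use part (b) of Andersson's theorem in $\C^n$, and use the block-wise order of vanishing at infinity (Remark~\ref{kvall}) together with Lemma~\ref{oandligheten} and Remark~\ref{rremark}. Your explicit computation of the exponent of $z_{n+k}$ is exactly the content of Remark~\ref{kvall} adapted to the product structure; the only cosmetic point is that, as in the paper's proof of Theorem~\ref{hickelprod}, one should replace $e_j$ by $\lfloor e_j\rfloor$ so that $\Qq$ is a lattice polytope.
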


\begin{thm}\label{hickelprod}
Assume that $F_j$ has support in $\Q$ of the form \eqref{product}. Assume that $\Phi$ is in the integral closure of $(F_1, \ldots, F_m)$ and that $\supp \Phi$ satisfies \eqref{phistod}.
Then there are polynomials $G_j$ that satisfy \eqref{bsekv} 
and 
\begin{equation*}
\supp (F_j G_j) \subseteq 
\prod_{j=1}^r 
\max \big (\mu (e_j + \vol(\Q)), \min (m,n+1) d_j-n_j\big ) \Sigma^{n_j}.
\end{equation*}
\end{thm}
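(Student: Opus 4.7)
The plan is to mirror the proof of Theorem~\ref{hickelsats}, but to work on the product toric variety $X_\Q=\P^{n_1}\times\cdots\times\P^{n_r}$ from Example~\ref{pluttex} and to replace Lemma~\ref{today} by a sharper K\"unneth vanishing. Concretely, I take $\Q_j=\Q$ for every $j$, let $\Delta=\Delta_\Q$ (which contains the first orthant by construction), and set
\[
\Qq=c_1\Sigma^{n_1}\times\cdots\times c_r\Sigma^{n_r},\qquad c_k=\max\bigl(\mu(e_k+\vol(\Q)),\ \min(m,n+1)d_k-n_k\bigr),
\]
with the intention of applying Theorem~\ref{arkesatsen} to $f=(f_1,\ldots,f_m)$ and to $\psi$ the $\Qq$-homogenization of $\Phi^\mu$.

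The principal step is verifying the cohomology hypothesis~\eqref{arke}. The relevant bundle is $\O(c_1-(q+1)d_1)\boxtimes\cdots\boxtimes\O(c_r-(q+1)d_r)$, whose $H^{0,q}$ decomposes by \eqref{kunneth} over $q_1+\cdots+q_r=q$. For $1\leq q\leq\min(m-1,n)$, i.e.\ $q+1\leq\min(m,n+1)$, the lower bound on $c_k$ forces $a_k:=c_k-(q+1)d_k\geq -n_k$ for every $k$. By Theorem~\ref{forsvinnande} a factor $H^{0,q_k}(\P^{n_k},\O(a_k))$ can be nonzero only when $q_k=0,\ a_k\geq 0$ or $q_k=n_k,\ a_k\leq -n_k-1$; the latter is ruled out by $a_k\geq -n_k$, so any nonvanishing K\"unneth summand has all $q_k=0$, contradicting $q\geq 1$. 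This is where the refinement $-n_j$ in the stated bound enters, and it is the only step that differs substantially from the template; the K\"unneth bookkeeping is what I expect to be the main obstacle.

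The residue condition~\eqref{residy} then follows essentially verbatim from the proofs of Theorems~\ref{hickelsats} and~\ref{agprod}. On $\C^n$ we have $\psi=\Phi^\mu$ in affine coordinates, and since $\Phi\in\overline{(F)}$ the estimate $|\Phi^\mu|\leq C|F|^\mu$ holds locally, so Theorem~\ref{residysats}(c) yields $\psi\1_{\C^n}R^f=0$. At infinity, a monomial $z^b$ appearing in $\Phi^\mu$ has block-sum of exponents $|b|_k\leq\mu e_k$, so following Remark~\ref{kvall} its $\Qq$-homogenization contains the factor $z_{n+k}^{c_k-|b|_k}$; hence $\psi$ vanishes along $D_{n+k}$ to order at least $c_k-\mu e_k\geq\mu\vol(\Q)\geq\mu r$, where $r\leq\vol(\Q)$ by Remark~\ref{rremark}. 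Lemma~\ref{oandligheten} then gives $\psi\1_{V_\infty}R^f=0$, and with both hypotheses of Theorem~\ref{arkesatsen} verified we obtain polynomials $G_j$ with $\sum F_jG_j=\Phi^\mu$ and $\supp(F_jG_j)\subseteq\Qq$, which is exactly the claimed bound.
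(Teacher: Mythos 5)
Your proposal matches the paper's proof almost line for line: same choice of $X=X_\Q=\P^{n_1}\times\cdots\times\P^{n_r}$, same $\Qq$, same K\"unneth + Theorem~\ref{forsvinnande} check of~\eqref{arke} (your explicit verification that $a_k\geq -n_k$ rules out the $q_k=n_k$ nonvanishing case is precisely the step the paper's ``Since $c_j\geq\min(m,n+1)d_j-n_j$, \eqref{jada} vanishes'' is appealing to), and same use of Remark~\ref{kvall}, Remark~\ref{rremark}, and Lemma~\ref{oandligheten} at infinity. The only cosmetic difference is that the paper first replaces $e_k$ by $\lfloor e_k\rfloor$ to ensure $\Qq$ is a lattice polytope; your argument implicitly assumes the $c_k$ are integers, which they are whenever $e_k\in\Z$, so this does not affect correctness in the regime the theorem actually addresses.
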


\begin{thm}\label{kollarprod}
Assume that $F_j$ has support in $\Q$ of the form \eqref{product}. Assume moreover that $\Phi$ vanishes on the zero set of the $F_j$ and $\supp\Phi$ satisfies \eqref{phistod}. 
Then there are polynomials $G_j$ that satisfy \eqref{kollarekv}
and 
\begin{equation}\label{prodkollarupp}
\supp (F_j G_j) \subseteq 
\prod_{j=1}^r \max\big (\mu (1 + e_j) \vol(\Q),
\min(m,n+1)d_j-n_j\big ) \Sigma^{n_j}.
\end{equation}
\end{thm}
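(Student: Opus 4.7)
The plan is to mirror the proof of Theorem~\ref{kollarsats}, adapting the cohomological analysis to products of projective spaces via Example~\ref{pluttex}, exactly as was done in the proof of Theorem~\ref{macproduct}. I would apply Theorem~\ref{arkesatsen} with $\Q_j:=\Q$ for all $j$, the fan $\Delta:=\Delta_\Q$ (so that $X_\Delta=\P^{n_1}\times\cdots\times\P^{n_r}$ is smooth and contains the first orthant as a cone), and the target polytope
\[
\Qq := c_1\Sigma^{n_1}\times\cdots\times c_r\Sigma^{n_r},\qquad c_k:=\max\bigl(\mu(1+e_k)\vol(\Q),\,\min(m,n+1)d_k-n_k\bigr),
\]
chosen so that the support bound produced by Theorem~\ref{arkesatsen} becomes precisely~\eqref{prodkollarupp}.

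For the cohomological hypothesis~\eqref{arke}, the bundle in question is $\O(c_1-(q+1)d_1)\boxtimes\cdots\boxtimes\O(c_r-(q+1)d_r)$, and by the K\"unneth formula~\eqref{kunneth} each summand of its $(0,q)$-cohomology with $1\leq q\leq \min(m-1,n)$ is a tensor product indexed by $(q_1,\ldots,q_r)$ with $\sum q_k=q$. Picking any $k$ with $q_k\geq 1$: either $1\leq q_k\leq n_k-1$, in which case the corresponding factor vanishes unconditionally by Theorem~\ref{forsvinnande}, or $q_k=n_k$, in which case vanishing requires $c_k-(q+1)d_k\geq -n_k$, which holds because $q+1\leq\min(m,n+1)$ and $c_k\geq\min(m,n+1)d_k-n_k$ by construction.

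For the residue-current hypothesis~\eqref{residy}, let $f_j$ be the $\Q$-homogenizations of $F_j$, $R^f$ the associated residue current, and $\psi$ the $\Qq$-homogenization of $\Phi^{\mu\vol(\Q)}$. In $\C^n=\U_{\sigma_0}$, Hilbert's Nullstellensatz together with the bound $r\leq \vol(\Q)$ from Remark~\ref{rremark} and the discussion after Theorem~\ref{arkesatsen} shows that the growth condition~\eqref{storlek} holds, so $\psi\1_{\C^n}R^f=0$ by Theorem~\ref{residysats}(c). At infinity, the description of the normal directions $\rho_{n+k}$ in Example~\ref{pluttex} implies that the $\Qq$-homogenization of any polynomial with support in $e_1\Sigma^{n_1}\times\cdots\times e_r\Sigma^{n_r}$ vanishes on the component $D_{n+k}$ of $V_\infty$ to order at least $c_k-e_k$; applied to $\Phi^{\mu\vol(\Q)}$, whose support lies in $\prod_k\mu\vol(\Q)\,e_k\,\Sigma^{n_k}$, this yields vanishing to order $c_k-\mu\vol(\Q)\,e_k\geq\mu\vol(\Q)$ on each $D_{n+k}$. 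Hence Lemma~\ref{oandligheten}, again using $r\leq\vol(\Q)$, gives $\psi\1_{V_\infty}R^f=0$.

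The main subtlety is engineering $c_k$ to pass both tests simultaneously: the term $\min(m,n+1)d_k-n_k$ in the $\max$ is what arises from neutralizing the $H^{0,n_k}$-factors in the K\"unneth decomposition, while $\mu(1+e_k)\vol(\Q)$ is exactly the order of vanishing needed along each toric divisor at infinity for global annihilation of $R^f$; taking the maximum handles both conditions at once. With both hypotheses verified, Theorem~\ref{arkesatsen} directly supplies polynomials $G_j$ satisfying~\eqref{kollarekv} and the support bound~\eqref{prodkollarupp}.
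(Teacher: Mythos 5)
Your proof is essentially the paper's intended argument. The paper gives no explicit proof of Theorem~\ref{kollarprod}; it states only that it ``follows along the same lines'' as Theorem~\ref{hickelprod}, ``cf.\ the proofs of Theorems~\ref{agsats} and~\ref{kollarsats}.'' You correctly synthesize exactly that: the setup $\Q_j=\Q$, $X=X_\Q=\P^{n_1}\times\cdots\times\P^{n_r}$, $\Qq=\prod c_k\Sigma^{n_k}$ with $c_k=\max(\mu(1+e_k)\vol(\Q),\min(m,n+1)d_k-n_k)$ is the same as in Theorem~\ref{hickelprod}; the K\"unneth verification of~\eqref{arke} is identical, and you spell out the case split ($1\leq q_k\leq n_k-1$ vs.\ $q_k=n_k$) more explicitly than the paper does; and the residue annihilation via order of vanishing is the argument from the proof of Theorem~\ref{kollarsats}. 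Your split of~\eqref{residy} into the $\1_{\C^n}$ and $\1_{V_\infty}$ pieces, handling the latter via Lemma~\ref{oandligheten}, is cosmetically different from the one-line treatment in the proof of Theorem~\ref{kollarsats}, but it is the same reasoning: $\pi^*\psi$ vanishes to order $\geq\mu r_j$ on every exceptional divisor $D_j$ of the normalized blow-up, which gives~\eqref{storlek}.

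One small blemish: the appeal to ``Hilbert's Nullstellensatz'' in the $\C^n$ step is a red herring. You do not need (and should not invoke) the Nullstellensatz; the hypothesis that $\Phi$ vanishes set-theoretically on the zero set of the $F_j$ already means $\pi^*\Phi$ vanishes to order $\geq 1$ on each $D_j$ lying over $V_F$, hence $\pi^*\Phi^{\mu\vol(\Q)}$ vanishes to order $\geq\mu\vol(\Q)\geq\mu r_j$, and this (via the discussion after Theorem~\ref{arkesatsen} together with Remark~\ref{rremark}) is exactly what gives~\eqref{storlek}. Referencing the Nullstellensatz here suggests an ideal-membership step that plays no role in the argument and, if taken literally, would be circular. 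Drop it and the proof reads cleanly.
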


Observe that 
\begin{equation*}
 \vol(\Q) = \vol(d_1\Sigma^{n_1}\times\cdots\times d_r \Sigma^{n_r})=
\frac{n!}{n_1! \cdot \cdots \cdot n_r!}d_1^{n_1} \cdot \cdots \cdot d_r^{n_r}.
\end{equation*}
In particular, if $n_j=1$ and $e_j=0$, then \eqref{prodkollarupp} reads that 
$\deg_{z_k}(F_jG_j) \leq  n \cdot n! \cdot d_1\cdots d_n$, which is (a slight improvement of) Rojas' example ~\cite[Example~2]{EL}. Also, observe that in general $\mu (1 + e_j) \vol(\Q)$ is much larger than $\min(m,n+1)d_j-n_j$, for example if $n_j >1$ for any $j$.

Theorems ~\ref{agprod}-\ref{kollarprod} improve Theorems \ref{agsats}-\ref{kollarsats}, respectively, for $\Q$ of the form \eqref{product}, unless $d_1=\ldots = d_r$ and $e_1=\ldots = e_r$, in which case they coincide.

Let us give a proof of Theorem \ref{hickelprod}. Theorems \ref{agprod} and \ref{kollarprod} follow along the same lines; cf. the proofs of Theorems \ref{agsats} and \ref{kollarsats}.

\begin{proof}[Proof of Theorem \ref{hickelprod}]
Note that if \eqref{phistod} holds, then, in fact, 
$\supp\Phi\subseteq \lfloor e_1\rfloor \Sigma^{n_1}\times \cdots \times \lfloor e_r \rfloor \Sigma^{n_r}$, where $\lfloor c \rfloor$ denotes the largest integer smaller than or equal to $c$. 
Let $\Q_j=\Q$, let $X=X_\Q=\P^{n_1}\times\cdots \times \P^{n_r}$, and let $\Qq= c_1 \Sigma^{n_1} \times \cdots\times c_r \Sigma^{n_r}$, where 
$c_j = \max(\mu (\lfloor e_1 \rfloor + \vol (\Q)), \min (m,n+1) d_j-n_j)$. Then 
\begin{equation*}
\O\big (D_\Qq-(D_{\Q{j_1}}+\ldots + D_{\Q_{j_{q+1}}})\big )= 
\O\big (c_1-(q+1)d_1)\boxtimes\cdots\boxtimes\O(c_r-(q+1)d_r\big ). 
\end{equation*}
Thus, by the K\"unneth Formula \eqref{kunneth}, for $q\geq 1$, \eqref{arke} is a sum  of terms which all contain a factor
\begin{equation}\label{jada}
H^{0,q}\big (\P^{n_j}, \O(c_j-(q+1)d_j)\big ),
\end{equation}
for which $q_j\geq 1$. Since $c_j \geq \min (m,n+1) d_j-n_j$, \eqref{jada} vanishes for $q\leq\min(m-1,n)$ according to Theorem \ref{forsvinnande} and so \eqref{arke} is satisfied for the required $q$ and ~$\J$. 

Let $R^f$ be the residue current associated with the $\Q$-homogenizations of the $F_j$ and let $\psi$ be the $\Qq$-homogenization of $\Phi^\mu$. By Theorem ~\ref{residysats}(c), the assumption that $\Phi\in\overline {(F)}$ implies that $\psi R^f =0$ in $\C^n$. Moreover, in light of Remark ~\ref{kvall}, $\psi$ vanishes at least to order $\mu \vol (\Q)$ along $V_\infty$, which by Lemma ~\ref{oandligheten} and Remark ~\ref{rremark} implies that $\psi$ annihilates $R^f$ at infinity. Thus $\psi$ satisfies \eqref{residy}, and so the result follows by applying Theorem ~\ref{arkesatsen}. 
\end{proof}

\section{Discussion of results}\label{exs}
Our results extend the classical results in essentially two directions. First, by taking into account the shape of the Newton polytope of the $F_j$, they give more precise estimates of $\nu$ and the degrees of the $G_j$ in \eqref{hilbert}. 
Second, our versions of Macaulay's and Max N\"other's Theorems extend the classical results in the sense that they apply to more general situations than when the $F_j$ lack common zeros at the hyperplane at infinity in $\P^n$.

\subsection{Degree estimates}\label{deges}
Our estimates of $\supp (F_j G_j)$ can be translated into degree bounds in the usual sense. Let us compare the degree estimates given by Theorem ~\ref{kollarsats} with Koll{\'a}r's result. Let $\pdeg(\poly)$ denote the degree of a generic polynomial with support in $\poly\subseteq \R^n_{\geq 0}$, in other words, $\pdeg(\poly)=\max_{\alpha\in\Z^n\cap \Q}|\alpha|$, where $|(\alpha_1,\ldots, \alpha_n)|=\alpha_1+\cdots +\alpha_n$. Then, unless $\min(m, n+1)> \lceil \mu (1/a+e) \vol (\Q)\rceil$,  \eqref{hus} gives the following degree estimate:
\begin{equation}\label{gradhus}
\deg (F_j G_j) \leq  \lceil \mu (1/a+e) \vol (\Q)\rceil \pdeg (\Q). 
\end{equation}
Assume that $\deg F_j \leq d$ and choose $\Q$ such that $\pdeg (\Q)=d$. Note that this is always possible; in particular, $\pdeg(d\Sigma^n)=d$. 
Then \eqref{gradhus} improves \eqref{kollupp} ~if
\begin{equation}\label{blak}
\vol (\Q) \leq \frac{(1+\deg \Phi) a d^{\mu-1}}{\mu (1+ ae)}; 
\end{equation}
to be precise, we should add a term $-a/(\mu(1+ae))$ to the right hand in \eqref{blak} side because of the integer parts in \eqref{gradhus}. 
Now $ae\leq \deg \Phi$ so that \eqref{blak} is in particular satisfied if 
$\vol (\Q) \leq a d^{\mu-1}/\mu$.
Thus Theorem ~\ref{kollarsats} improves Koll{\'a}r's result if the volume of the Newton polytope of the $F_j$ is small compared to $ad^{\mu-1}/\mu$, see also ~\cite{Sombra}.

An analogous analysis shows that Theorems ~\ref{agsats} and ~\ref{hickelsats} improve the results by Andersson-G\"otmark and Hickel, respectively, if $\vol (\Q) \leq ad^{\mu-1}$.

\subsection{Common zeros at infinity}\label{macmax}
Whether or not the $\Q_j$-homogenizations of the polynomials $F_j$ have common zeros at infinity clearly depends on the polytopes $\Q_j$. For example, given a smooth polytope $\Q$, the $\Q$-homogenizations of $F_j$ do have common zeros unless $\np(F_1,\ldots, F_m)=\poly $. To see this, assume that $\np(F_1,\ldots, F_m)$ is strictly included in $\poly$, so that there is a vertex $v\in \poly\setminus \np(F_1,\ldots, F_m)$. Assume that $v$ meets the facets $\tau_1, \ldots, \tau_n$ of $\Q$ with corresponding coordinates $x_1,\ldots, x_n$. That $v\notin\supp F_j$ implies that the $\poly$-homogenization $f_j$ of $F_j$ is divisible by at least one of the $x_1,\ldots, x_n$. Indeed, the $\poly$-homogenization of $z^\alpha$ where $\alpha\in \poly$ is divisible by the coordinate functions $x_i$ corresponding to the facets $\tau_i$ for which $\alpha$ is not contained in $\tau_i$. In particular, all $f_j$ vanish at the point $x_1=\ldots = x_n=0$ at infinity. 

On the other hand, the $\Q$-homogenizations of any generic choice of $n$ polynomials $F_j$ with support in $\Q$, meaning that for $\alpha\in\Q$ the coefficient of $z^\alpha$ in $F_j$ is generic, will have no common zeros at infinity, since the variety at infinity is of dimension $n-1$. 

Thus it may well happen that even though the polynomials $F_j$ have common zeros in $\P^n$ one can find a polytope $\Q$ such that the $\Q$-homogenizations (or $d_j\Q$-homogenizations) of the $F_j$ lack common zeros at infinity. Hence Theorems ~\ref{mac} and ~\ref{max} and Theorems \ref{macsamma}-\ref{maxforfinad} apply to more general systems of polynomials $F_j$ than Macaulay's and N\"other's results. Let us look at an example.

\begin{ex}\label{mellandag}
Let $F_1=z+zw+w^2$ and $F_2=z+2zw+3w^2$. Then the common zero set of $F_1$ and $F_2$ in $\C^2$ is discrete. Note that the $\P^2$-homogenizations $tz+ zw + w^2$ and $tz+2zw+3w^2$ of $F_1$ and $F_2$, respectively, have a common zero at the hyperplane at infinity, namely at $t=w=0$. Thus we cannot apply N\"other's original theorem to this example. In fact, it is not hard to check that $\Phi=z^2+2zw\in (F)$, but if $G_1, G_2$ are polynomials such that $F_1G_1+F_2G_2=\Phi$, then necessarily $\deg (F_j G_j)\geq 3$ for $j=1$ or $j=2$, so that N\"other's bound $\deg (F_j G_j)\leq \deg \Phi$ does not hold in this case. 

Let $\Q=\np(F_1,F_2,1,z,w)$, that is, the polytope with vertices $(0,0)$, $(1,0)$, $(1,1)$, and $(0,2)$. Then the corresponding toric variety $X_\Q$ is smooth and $V_\infty$ consists of two irreducible components. We choose homogeneous coordinates $z,w,x_1,x_2$ so that $\{x_1=0\}$ and $\{x_2=0\}$ are the divisors corresponding to the facets with vertices $(1,0),(1,1)$ and $(1,1),(0,2)$, respectively. According to Section ~\ref{homo},  the $\Q$-homogenizations of $F_1$ and $F_2$ are given by 
$f_1=zx_2 + zw+ w^2 x_1$ and $f_2=zx_2 + 2zw+ 3w^2 x_1$, respectively. Now $f_1$ and $f_2$ have no common zeros at $V_\infty=\{x_1=0\}\cup\{x_2=0\}$ as can be checked using local coordinates on $X_\poly$, see Section ~\ref{homo}. For example, in the $(z,1, 1, x_2)$-chart $\U$ we have that $f_1=zx_2+z+1$ and $f_2=zx_2+2z+3$ so that in $\U\cap V_\infty=\{x_2=0\}$ we get $f_1=z+1$ and $f_2=2z+3$, which clearly have no common zeros. 

It follows that we can apply Theorem ~\ref{max} to any polynomial in $(F)$. Let $\Phi=z^2 + 2 zw$. Then $\Phi\in (F)$ and $\supp \Phi \subseteq 2 \Q$, and so Theorem ~\ref{max} asserts that there are polynomials $G_j$ such that \eqref{ideal} is satisfied and $\supp (F_j G_j)\subseteq 2 \Q$. In fact, we can choose $G_1=2z+3w$ and $G_2=-z-w$. 
\end{ex}

\def\listing#1#2#3{{\sc #1}:\ {\it #2},\ #3.}

\end{document}